\DeclareMathOperator*{\esssup}{ess\,sup}
\def\bp{\begin{proof}}
\def\ep{\end{proof}}
\def\n{\nabla}
\def\intl#1{\int\limits_{#1}}
\def\intll#1#2{\int\limits_{#1}^{#2}}
\def\dm{|\hskip-0.05cm|}
\def\OO{\Omega}
\def\displ{\displaystyle}
\def\VSE{\vspace{6pt}\\&\displ }
\def\VS{\vspace{6pt}\\\displ }
\def\rf#1{{\rm(\ref{#1})}}
\def\chiu{\hfill$\displaystyle\vspace{4pt}
\underset\Box\null$\par}
\def\R{\Bbb R}
\def\N{\Bbb N}
\def\à{\`{a}}
\def\dy{\displaystyle}
\def\vep{\varepsilon}
\def\be{\begin{equation}}
\def\ba{\begin{array}}
\def\ea{\end{array}}
\def\ee{\end{equation}}
\def\vs1{\vspace{1ex}}
\def\ov{\overline}
\def\po{\partial\Omega}
\def\é{\'{e}}
\font\sc=cmcsc10
\title{\large Some new   properties of a suitable weak solution\\ to the Navier-Stokes equations}
\author{\sc  Francesca Crispo\thanks{Dipartimento di Matematica e Fisica,  
Universit\`{a} degli Studi della Campania
``L. Vanvitelli'', via Vivaldi 43, 81100 Caserta,
 Italy.
 francesca.crispo@unicampania.it}
\and \sc
Carlo Romano Grisanti \thanks{Dipartimento di Matematica, Universit\`a di Pisa, via Buonarroti 1/c, 56127 Pisa, Italy. carlo.romano.grisanti@unipi.it}\and \sc  Paolo Maremonti
\thanks{
Dipartimento di Matematica e Fisica,  
Universit\`{a} degli 
Studi della Campania
``L. Vanvitelli'', via Vivaldi 43, 81100 Caserta,
 Italy.
paolo.maremonti@unicampania.it}\thanks{The research activity of C.R. Grisanti is performed under the auspices
of  GNAMPA-INdAM and partially supported by the Research
Project of the University of Pisa "Energia e regolarità: tecniche
innovative per problemi classici di equazioni alle derivate parziali". The research activity of F. Crispo and P. Maremonti is performed under the
auspices of   GNFM-INdAM and is partially supported by MIUR via the PRIN 2017 ``Hyperbolic Systems of Conservation Laws and Fluid Dynamics: Analysis and Applications''.}}
\date{\small \it In Memoria di Christian Simader}
\begin{document}
\markboth{\footnotesize\rm Francesca Crispo, Carlo Romano Grisanti and   P.
Maremonti} {\footnotesize\rm
Some new properties of a suitable  weak solution to the Navier-Stokes equations}
\maketitle 
{\bf Abstract} - {\small The paper is concerned with 
the IBVP of  the Navier-Stokes equations. The goal is the construction of a weak solution enjoying some new properties. Of course, we look for   properties which are global in time.  The results hold assuming an initial data      $v_0\in J^2(\OO)$.} 
\vskip 0.2cm
 \par\noindent{\small Keywords: Navier-Stokes equations,   weak solutions, regularity and partial regularity. }
  \par\noindent{\small  
  AMS Subject Classifications: 35Q30, 35B65, 76D03.}  
 \par\noindent
 \vskip -0.7true cm\noindent
\newcommand{\red}{\protect\bf}
\renewcommand\refname{\centerline
{\red {\normalsize \bf References}}}
\newtheorem{ass}
{\bf Assumption} 
\newtheorem{defi}
{\bf Definition} 
\newtheorem{tho}
{\bf Theorem} 
\newtheorem{rem}
{\sc Remark} 
\newtheorem{lemma}
{\bf Lemma} 
\newtheorem{coro}
{\bf Corollary} 
\newtheorem{prop}
{\bf Proposition} 
\renewcommand{\theequation}{\arabic{equation}}
\setcounter{section}{0}
\section{Introduction}\label{intro} This note concerns the 3D-Navier-Stokes initial boundary 
value problem:
\be\label{NS}\ba{l}v_t+v\cdot
\nabla v+\nabla\pi_v=\Delta
v,\;\nabla\cdot
v=0,\mbox{ in }(0,T)\times\OO,\VS v=0\mbox{ on }(0,T)\times\po,\hskip0.12cm
v(0,x)=v_0(x)\mbox{ on
}\{0\}\times\OO.\ea\ee  In system \rf{NS} $\OO\subseteq\R^3$ is assumed bounded or exterior, and its boundary is  smooth. The symbol   $v$ denotes the kinetic field, $\pi_v$ is the pressure field,
 $v_t:=
\frac\partial{\partial t}v$  and
 $v\cdot\nabla v:=
v_k\frac\partial{\partial x_k}v$. 
In several papers, related to the Navier-Stokes initial boundary value problem,  the authors  give results concerning the partial regularity of  a suitable weak solution (see Definition\,\ref{SWS} below).  This  is made in order    to highlight the  properties of a weak solution, corresponding to a data $v_0\in L^2(\OO)$, divergence free, that can be suitable  to state the well posedness of the equations, see e.g. \cite{L, Ld,  CKN, Du, Vs, Ch-Vs,CM-I,CM-II,Mi,CM-III}\footnote{\,There is a wide literature concerning extension to the IBVP of results proved for the Cauchy problem. One of the most interesting of these kinds of extensions is sure the energy inequality in strong form, see e.g. \cite{GM,MS}}. We believe that, in connection with the non-well posedeness of the Navier-Stokes Cauchy or IBVP problem, this kind of investigation achieves   a further interest. Actually, in the  recent paper \cite{BV}, it is considered the possibility of non uniqueness of a weak solution to the Navier-Stokes equations. This is proved for {\it very weak solutions}, that is   solutions satisfying a variational formulation of the Navier-Stokes equations and simply belonging to $C([0,T);L^2(\OO))$. As a consequence of the weakness of the solutions, the result of non uniqueness   fails to hold for regular solutions, but {\it a priori} it also does not work  for a suitable weak solution, that is a solution  verifying an energy inequality. So that, in order to better delimit  the validity of a possible  counterexample to the uniqueness in the set of weak solutions corresponding to an initial data in $L^2(\OO)$, it seems of a certain interest to support  the energy inequality, or its variants,  by means of a wide set of global properties of the weak solutions not necessarily only consequences of the energy inequality, but of the coupling of other {\it a priori} estimates.  \par 
The aim of this note is to prove some new properties of a weak solution. We investigate two questions. One is related to a \underline {sort of energy  equality} for a suitable weak solution. It is easy to understand that the possible validity of the energy equality achieves a {\it mechanical} interest that goes beyond the  above question concerning the well posedeness. Actually,  we construct a weak solution $(v,\pi_v)$ to the Navier-Stokes initial boundary value problem such that the ``energy equalities'' of the kind\be\label{EE-I} \dm v(t)\dm_2^2+2\intll st\dm \n v(\tau)\dm_2^2d\tau-\dm v(s)\dm_2^2=-H(t,s), \mbox{a.e. in }t\geq s>0\mbox{ and for }s=0\, \ee and  
\be\label{EE-II}  {\displ2\intll st\dm \n v(\tau)\dm_2^2d\tau} = F(t,s)\big({\dm v(s)\dm_2^2-\dm v(t)\dm_2^2}\big), \mbox{a.e. in }t\geq s>0\mbox{ and for }s=0\, \ee are fulfilled.
 The functions  $H(t,s)$ and $F(t,s)$ have  suitable expressions, see formula \rf{SEE}  and formula \rf{SEE-I}. If $H(t,s)\leq0$, then the energy equality holds (that is  {\it a fortiori} $H(t,s)=0$). If $F(t,s)\geq1$, then the energy equality holds (that is {\it a fortiori} $F(t,s)=1$).   These results are a consequence of the fact that we are able to prove that an approximating  sequence $\{(v^m,\pi_{v^m})\}$ is strongly converging in $L^r(0,T;W^{1,2}(\OO))$ for all $r\in[1,2)$. The strong convergence, in turn, is a consequence of the property: $P\Delta v^m\in L^{\frac23}(0,T;L^2(\OO))$ for all $m\in\N$ and  $T>0$.  Unfortunately we are not able to put $r=2$, that should give the energy equality. For 2D-Navier-Stokes equations one proves that  $H(t,s)=0$. It is important to stress that the term $H(t,s)$ is equal to zero in 2D-case  thanks to our approximating approach, without appealing to the regularity of the limit. Another result proves that $v\in L^{\mu(p)}(0,T;L^p(\OO))$, with $\mu(p):=\frac p{p-2}$ and $p\in(6,\infty]$. This result is not new in literature. A first contribution in this sense is proved in \cite{T} for a particular geometry and it is reconsidered in \cite{Du}. The proof given in \cite{Du} for exterior domains is not completely clear to the present authors. However our proof is alternative with respect to the ones of the  quoted papers. \par   \par
 In order to better state our result we recall the following definitions. 
We denote by $J^2(\OO)$ and $J^{1,2}(\OO)$ the completion of $\mathscr C_0(\OO)$ in $L^2(\Omega)$ and in $W^{1,2}(\OO)$ respectively, where $\mathscr C_0(\OO)$ is the set of smooth divergence free functions. Moreover $(\cdot,\cdot)$ represents the scalar product in $L^2(\Omega)$. 
 \begin{defi}\label{WS}{\sl  Let $v_0\in J^2(\OO)$. A pair $(v,\pi_v)$, such that $v:(0,\infty)\times\OO \to\R^3$ and $\pi_v:(0,\infty)\times\OO \to \R$, is said a weak solution to problem {\rm\rf{NS}} if
\begin{itemize}\item [\label{IO}i)] for all $T>0$,
 $v\in  L^2(0,T; J^{1,2}
(\OO ))$ and, for some $q,r$, $\pi_v\in L^r_{\ell oc}([0,T);L^q_{\ell oc}(\ov\OO))$,
\item
[\label{II}ii)] $\displ\lim_{t\to0}\dm
v(t)-v_0\dm_2=0$,\item[iii)]
for all $t,s\in(0,T)$,  the pair $(v,\pi_v)$
satisfies the  equation:
$$\displ\intll
st\Big[(v,\varphi_\tau)-(\nabla
v,\nabla
\varphi)+(v\cdot\nabla\varphi,v)+(\pi_v,\nabla
\cdot\varphi)\Big]d\tau+(v(s),\varphi
(s))=(v(t),\varphi(t)),$$
$$\mbox{ for all } \varphi\in C^1_0([0,T)\times\OO ).$$
\end{itemize}}\end{defi} 
In \cite{CKN} and in \cite{scheffer}, in order to investigate  the regularity of a weak solution, it is introduced an     energy inequality having a local character:
\begin{defi}\label{SWS}{\sl A pair $(v,\pi_v)$ is said a suitable weak solution if it is a weak solution in the sense of the Definition\,\ref{WS} and, moreover,
\be\label{SEI}\ba{l}\displ\intl{\OO}|v(t)|^2\phi(t)dx+2\intll s t\intl
{\OO}|\nabla v|^2\phi\, dxd\tau\leq \intl{\OO}|v(s)|^2\phi(s)dx\VS\hskip 2,5cm+\intll s t\intl{\OO}|v|^2(\phi_\tau+\Delta\phi)dxd\tau+\intll s t\intl{\OO}(|v|^2+2\pi_v)
v\cdot\nabla\phi dxd\tau,\ea\ee for all $t> s$, for $s=0$ and a.e. in $s\geq 0$, and for all 
nonnegative $\phi\in C_0^\infty(\R\times\ov\OO)$. We denote by $\varSigma\subseteq[0,\infty)$ the set of the instants $s$ for which inequality  \rf{SEI} holds.}\end{defi} 
Thanks to the properties of the pressure field furnished by the existence theorem, from inequality \rf{SEI} one deduces the classical one:
\be\label{EI}\dm v(t)\dm_2^2+2\intll st\dm \n v(\tau)\dm_2^2d\tau\leq \dm v(s)\dm_2^2,\mbox{ for all }t>s\mbox{ and  }s\in\varSigma\,.\ee
 We are going to prove the following result.
  \begin{tho}\label{CT}{\sl For all $v_0\in J^2(\OO)$ there exists a suitable weak solution $(v,\pi_v)$ to problem \rf{NS} that is the weak limit in $L^2(0,T; J^{1,2}(\Omega))\times L^q_{\ell oc}([0,T);L^2_{\ell oc}(\ov\OO))$, $q\in(1,\frac{12}{11})$, of a sequence $\{(v^m,\pi_{v^m})\}$ of solutions to \eqref{MNS}.
The sequence $\{v^m\}$ converges strongly to $v$ in $L^p(0,T;W^{1,2}(\OO))$ for all $p\in [1,2)$. Further, for any $q\in(6,\infty]$,  $v\in L^{\mu(q)}(0,T;L^q(\OO))$ with $\mu(q):=\frac q{q-2}$, and
 $v$ satisfies relation  \rf{EE-I} with 
\be\label{SEE}H(t,s):=\left\{\hskip-0.2cm\ba{l}\displ\lim_{\alpha\to0} \lim_{m\to\infty}\alpha\!\intll st\! \frac{\|v^m(\tau)\dm_2^2}{\left(K+\|\nabla v^m(\tau)\|_2^2\right)^{\alpha+1}}\frac d{d\tau}\|\nabla v^m(\tau)\|_2^2\,d\tau,\mbox{ for  }s>0,\VS\lim_{s\to0}\lim_{\alpha\to0} \lim_{m\to\infty}\alpha\!\intll st \!\frac{\|v^m(\tau)\dm_2^2}{\left(K+\|\nabla v^m(\tau)\|_2^2\right)^{\alpha+1}}\frac d{d\tau}\|\nabla v^m(\tau)\|_2^2\,d\tau,\mbox{ for  }s=0\,,\ea\right.\ee 
for any arbitrary constant $K>0$, as well as 
 $v$ satisfies relation  \rf{EE-II} with 
\be\label{SEE-I}F(t,s):=\lim_{\alpha\to0}\lim_{m\to\infty}\frac 1{\big(K_1+\dm \n v^m(t_{\alpha,m})\dm_2\big)^\alpha}\ee 
for any arbitrary constant $K_1\geq0$. Finally,  the following inclusion holds:  $\mathcal G_1:=\{t,s\mbox{ such that }\rf{EE-I}\mbox{ is true}\}\subseteq\mathcal G_2:=\{t,s\mbox{ such that }\rf{EE-II}\mbox{ is true}\}$.}
\end{tho}
\begin{rem}{\rm Since the proprieties of the pressure field $\pi_v$ are not our main interest in this paper, we limit ourselves to point out the one that allows us to state that $(v,\pi_v)$ is a suitable weak solution. Actually, in our construction  the pressure $\pi_v$  enjoys the properties that one can deduce by means of Lemma\,\ref{APWS}.  For more exhaustive properties relative  to the pressure field of a suitable weak solution (that is with an initial data only in $J^2(\OO)$) a possible reference is \cite{JM}.\par 
We note that the quantity $H(t,s)$ is independent of the constant $K$. This fact is intriguing and somehow leads to conjecture that $H(t,s)=0$.
\par If $v_0\in J^2(\OO)\setminus J^{1,2}(\OO)$, almost everywhere in $t>0$, following the proof idea we also get $$\dm v(t)\dm_2^2+2\intll0t\dm \n v(\tau)\dm_2^2d\tau=-\lim_{\alpha\to0}\lim_{m\to\infty}\intll0t\frac{\|v^m(\tau)\dm_2^2}{\left(K+\|\nabla v^m(\tau)\|_2^2\right)^{\alpha+1}}\frac d{d\tau}\|\nabla v^m(\tau)\|_2^2\,d\tau\,. $$  \par One proves that there exists an instant $\theta>0$ such that $ v^m(t,x)\in C([\theta,\infty);J^{1,2})$, in particular there exists a $M$ such that $\dm \n v^m(t)\dm_2\leq M$ for all $t\geq \theta$ and $m\in\N$ (one proves this result by repeating the arguments employed for the structure theorem by Leray). Hence, via estimate \rf{ddtnablavm-I} and taking into account the energy inequality, we can deduce $$\mbox{for all }t>\theta, \; \displ\lim_{\alpha\to0} \lim_{m\to\infty}\alpha\!\intll \theta t\! \frac{\|v^m(\tau)\dm_2^2}{\left(K+\|\nabla v^m(\tau)\|_2^2\right)^{\alpha+1}}\frac d{d\tau}\|\nabla v^m(\tau)\|_2^2\,d\tau=0\,.$$  Hence we get that function  $H(t,s)=H(\theta,s)$ for all $t>\theta$, and $H$ becomes a constant function for $t>\theta$.\par Concerning the function $F$ we remark that its values are independent of $K_1\geq0$. The fact that $K_1$ can be chosen equal to zero makes a difference with $K$ in function $H$, as well as  $\mathcal G_1\subseteq \mathcal G_2$ is another difference.\par   In the introduction we remarked that if  $F(t,s)\geq1$, then the energy equality holds, that is $F(t,s)=1$. Since we are not in a position  to prove $F(t,s)\geq1$, {\it a priori} we have to consider that $F(t,s)\leq1$. However, we can claim that almost everywhere in $t>0$,   $F(t,0)>0$ holds. Actually, more in general, assume that $s\in \Sigma$ and $\dm v(s)\dm_2\ne0$ and that exists  a sequence $\{t_p\}$ converging to $s$ such that $F(t_p,s)=0$. Then, from formula \rf{EE-II}, we deduce that    $\dm v(\tau)\dm_2=0$ a.e. in $\tau\in(s,t_p)$ holds for all $p\in\N$. Hence  we can select a new sequence $\{t'_p\}\subset(s,t_p)$ such that $\dm v(t'_p)\dm_2=0$. By virtue of the right-$L^2$-continuity in $s$, we get $\displ\lim_{t'_p\to s}\dm v(t'_p)-v(s)\dm_2=0$, which is a contradiction with $\dm v(s)\dm_2\ne0$.\par  From formulas \rf{EE-I}-\rf{EE-II}, a.e. in $t\in \mathcal G_1$ and for $s=0$, we easily deduce that $$H(t,0)=(1-F(t,0))\big(\dm v(0)\dm_2^2-\dm v(t)\dm_2^2\big)\,.$$ Therefore, via \rf{EE-I} we deduce \be\label{CF}H(t,0)=\big(\mbox{$\frac1{F(t,0)}-1$}\big)\intll 0t\dm\n v(\tau)\dm_2^2d\tau\,.\ee Recalling that,  for $t\geq\theta$, $H(t,0)=H(\theta,0)$, then from \rf{CF} we deduce that $F(t,0)$ is a continuous function for $t\geq\theta$. }
\end{rem} 
\begin{rem}{\rm In paper \cite{Ng}  a new energy inequality  is proposed:
$$\dm v(t)\dm_2^2+N(t)+2\intll0t\dm \n v(\tau)\dm_2^2d\tau\leq \dm v_0\dm_2^2,\mbox{ for all }t>0\,.$$  Function $\displ N(t):=\limsup_{\delta\to0}\intll\delta t \left\| \frac{u(\tau)-u(\tau-\delta) }{\delta^\frac12}\right\|_2^2d\tau\geq0$ can be intepretred as $\frac12$-time derivative. It is not known if $N(t)>0$ holds. In the two dimensional case one proves that $N(t)=0$. Of course,   we are not able to compare the solution furnished in \cite{Ng} and the one of Theorem\,\ref{CT}. \par In paper \cite{Mi}  the compatibility between an energy equality and an initial data $v_0\in J^2(\OO)$  is proved. This supports the idea that $H(t,s)$ can be equal to zero.}\end{rem}
\begin{rem}{\rm We point out that by a proof completely similar to the one of Theorem \ref{CT}, one can prove the validity of the following generalized energy equality
\be\label{GEE}\ba{l}\displ\intl{\OO}|v(t)|^2\phi(t)dx+2\intll s t\intl
{\OO}|\nabla v|^2\phi\, dxd\tau= \intl{\OO}|v(s)|^2\phi(s)dx\VS\hskip1cm +\intll s t\intl{\OO}|v|^2(\phi_\tau+\Delta\phi)dxd\tau+\intll s t\intl{\OO}(|v|^2+2\pi_v)v\cdot\nabla\phi dxd\tau
-\widetilde H(t,s),\ea\ee
a.e. in $t\geq s>0$  and for $s=0$, where $$\widetilde H(t,s):=\left\{\hskip-0.2cm\ba{l}\displ\lim_{\alpha\to0} \lim_{m\to\infty}\alpha\!\intll st\! \frac{\|\phi^\frac 12(\tau)v^m(\tau)\dm_2^2}{\left(K+\|\nabla v^m(\tau)\|_2^2\right)^{\alpha+1}}\frac d{d\tau}\|\nabla v^m(\tau)\|_2^2\,d\tau,\mbox{ for  }s>0,\VS\lim_{s\to0}\lim_{\alpha\to0} \lim_{m\to\infty}\alpha\!\intll st \!\frac{\|\phi^\frac 12(\tau)v^m(\tau)\dm_2^2}{\left(K+\|\nabla v^m(\tau)\|_2^2\right)^{\alpha+1}}\frac d{d\tau}\|\nabla v^m(\tau)\|_2^2\,d\tau,\mbox{ for  }s=0\,,\ea\right.$$ 
for any arbitrary constant $K>0$ and for all 
nonnegative $\phi\in C_0^\infty(\R\times\ov\OO)$. }\end{rem}
\par The plan of the paper is the following. In Section\,2 we give some preliminaries and auxiliary lemmas. In Section\,3  we give the proof of the theorem. In the appendix we recall some known properties of the pressure field that are employed in Section\,2.
\section{\label{PR}Some preliminary results}For $p\in(1,\infty)$ we set $J^p(\OO):=$completion of $\mathscr C_0(\OO)$ in $L^p(\OO)$. By $P_p$ we denote the projector from $L^p(\OO)$ onto $J^p(\OO)$. In the case of $p=2$ we write $P_2\equiv P$. For any $R>0$ we set $B_R=\{x\in\R^3:|x|<R\}$.\par 
We start with the following a priori estimate:
\begin{lemma}\label{TINT}{\sl Let $\OO\subseteq\R^n$ and  let   $u\in W^{2,2}(\OO)\cap J^{1,2}(\OO) $. Then there exists a constant $c$ independent of $u$ such that    
\be\label{INTIII}\dm u\dm_r\!\leq\! c\dm P\Delta u\dm_2^a\dm u\dm_q^{1-a},\quad a\big(\mbox{$\frac12-\frac2n$}\big)+(1-a)\mbox{$\frac1q$}={\textstyle\frac1r},\ee
provided that   $a\in [0,1)$.}\end{lemma}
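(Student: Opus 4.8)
The plan is to combine a second-order a priori estimate for the Stokes operator with the classical Gagliardo--Nirenberg interpolation inequality, the point being that $P\Delta u$ controls the full Hessian of $u$.

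First I would reduce the problem to an estimate involving $D^2u$. Since $u\in J^{1,2}(\OO)$ has vanishing trace and is divergence free, the decomposition $\Delta u=P\Delta u+\n\pi$, with $\n\pi=(I-P)\Delta u$, exhibits $(u,\pi)$ as a solution of the steady Stokes system with datum $f:=P\Delta u\in J^2(\OO)$. The $L^2$ regularity theory for the Stokes problem (Cattabriga--Solonnikov for bounded $\OO$, together with the pressure estimates recalled in the appendix for the exterior case) then yields
$$\dm D^2u\dm_2\leq c\,\dm P\Delta u\dm_2,$$
with $c$ independent of $u$. For bounded $\OO$ this follows from the inhomogeneous bound $\dm u\dm_{W^{2,2}}\leq c\dm f\dm_2$ after absorbing the lower-order terms by iterated Poincar\'e (legitimate precisely because the trace of $u$ vanishes); for exterior $\OO$ one invokes the scale-invariant homogeneous Stokes estimate directly.

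Second I would apply the Gagliardo--Nirenberg inequality
$$\dm u\dm_r\leq c\,\dm D^2u\dm_2^{\,a}\,\dm u\dm_q^{\,1-a},$$
which holds, with no additive lower-order summand, exactly under the dimensional balance $a\big(\frac12-\frac2n\big)+(1-a)\frac1q=\frac1r$ in the range $a\in[0,1)$; here $\frac12-\frac2n$ is the Sobolev exponent of $W^{2,2}$ in dimension $n$ (two derivatives subtract $2/n$ from $1/2$). Because the stated exponent relation is precisely the scaling identity, the inequality is available in its purely multiplicative form: on $\R^n$ this is Nirenberg's scale-invariant inequality, and on $\OO$ one either extends $u$ while controlling only the homogeneous seminorm, or uses the vanishing trace to absorb lower-order contributions via Poincar\'e. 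Chaining this with the estimate of the first step gives \rf{INTIII}.

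The main obstacle is the homogeneous, additive-term-free character of both estimates, which is essential since \rf{INTIII} carries no additive term on the right. For bounded domains this is secured by Poincar\'e once the zero trace of $u$ is exploited; the genuinely delicate case is the exterior domain, where Poincar\'e is unavailable and one must rely on the scale-invariant second-derivative estimate for the Stokes operator and on the matching scale invariance of the Gagliardo--Nirenberg exponents. The restriction $a<1$ serves to avoid the endpoint at which $r$ reaches the critical exponent determined by $\frac12-\frac2n$.
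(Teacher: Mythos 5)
Your bounded-domain argument is sound, but the exterior case --- which the paper explicitly needs, since $\OO$ is assumed bounded \emph{or exterior} --- breaks at the very first step. The homogeneous estimate $\dm D^2u\dm_2\leq c\dm P\Delta u\dm_2$ is \emph{false} in a three-dimensional exterior domain, and there is no ``scale-invariant homogeneous Stokes estimate'' to invoke: the obstruction is not scaling but the behavior at spatial infinity. A concrete counterexample: let $w$ be the classical exterior Stokes flow with $w=0$ on $\po$ and $w\to e_1$ as $|x|\to\infty$ (it exists for $n=3$, with $|w-e_1|\sim|x|^{-1}$, $D^2w\sim|x|^{-3}\in L^2$, and $\Delta w=\n p$, so that $P\Delta w=0$). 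Setting $u_R:=\chi(x/R)\,w$ plus a Bogovskii correction in the annulus $R<|x|<2R$ to restore the divergence-free condition, one checks that $u_R\in W^{2,2}(\OO)\cap J^{1,2}(\OO)$, that all commutator terms ($\n\chi_R\cdot\n w$, $w\,\Delta\chi_R$, $p\,\n\chi_R$, and the Bogovskii term) tend to $0$ in $L^2$ as $R\to\infty$, hence $\dm P\Delta u_R\dm_2\to0$, while $\dm D^2u_R\dm_2\to\dm D^2w\dm_2>0$. So no constant can close your first step. (Note that $\dm u_R\dm_q\to\infty$ along this sequence, which is why the multiplicative inequality \rf{INTIII} itself survives: the $L^q$ factor compensates.) The known sharp range for $\dm D^2u\dm_q\leq c\dm P\Delta u\dm_q$ in exterior domains is $q<n/2$, i.e.\ $q\in(1,\frac32)$ for $n=3$ (Maremonti--Solonnikov \cite{MS-III,MS-IV}); this same phenomenon is visible inside the paper itself, in the restriction $s\in(1,\frac32)$ of the maximal-regularity estimate for problem \rf{STF} in the appendix. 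Since the lemma is applied with the $L^2$ norm of $P\Delta u$, you are exactly outside the admissible range.

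This failure is precisely why the paper does not argue as you do: its ``proof'' is a citation of \cite{MRIII,MRIV}, where inequality \rf{INTIII} is established \emph{directly}, bypassing second derivatives altogether. The technique there represents $u$ through the Stokes semigroup generated by $P\Delta$ (schematically $u=e^{tP\Delta}u-\intll0t e^{\tau P\Delta}P\Delta u\,d\tau$, combined with $L^q$--$L^r$ semigroup estimates and optimization in $t$, or the dual formulation thereof), which yields the purely multiplicative bound with a constant independent even of the domain. So the honest summary is: your step 2 (homogeneous Gagliardo--Nirenberg, with the zero trace used to absorb lower-order terms) is fine, and your overall two-step plan works on bounded domains, but for exterior domains the reduction to $D^2u$ is a genuine gap --- indeed it is the whole point of the cited interpolation papers that this reduction must be avoided.
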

\begin{proof} The result of the lemma is a special case of a general one proved in \cite{MRIII,MRIV}.\end{proof}
\begin{lemma}[Friedrichs's lemma]\label{FR}{\sl Let $\OO$ be bounded. For all $\vep>0$ there exists  $N\in \N$ such that
\be\label{FRI}\dm u\dm_2^2\leq(1+\vep) \mbox{${\overset{N}{\underset{j=1}\sum}}$}(u,a^j)^2+\vep\dm\nabla u\dm_2^2,\mbox{ for any }u\in W^{1,2}(\OO)\,,\ee
where $\{a^j\}$ is an orthonormal basis of $L^2(\OO)$}\,.\end{lemma}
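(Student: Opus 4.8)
The plan is to argue by contradiction, the two essential ingredients being the compact embedding $W^{1,2}(\OO)\hookrightarrow L^2(\OO)$, which is exactly where the boundedness hypothesis on $\OO$ enters, and Parseval's identity for the orthonormal basis $\{a^j\}$. Suppose that \rf{FRI} were false. Then there would exist a fixed $\vep_0>0$ such that for every $N\in\N$ one can pick some $u_N\in W^{1,2}(\OO)$ violating the inequality. Since both sides of \rf{FRI} are homogeneous of degree two in $u$, and a violating $u_N$ is necessarily nonzero, I may normalize $\dm u_N\dm_2=1$, so that the negation of \rf{FRI} reads
\be
1>(1+\vep_0)\sum_{j=1}^N(u_N,a^j)^2+\vep_0\dm\nabla u_N\dm_2^2.
\ee
In particular $\dm\nabla u_N\dm_2^2<\vep_0^{-1}$, and together with $\dm u_N\dm_2=1$ this shows that $\{u_N\}$ is bounded in $W^{1,2}(\OO)$.

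Next I would use the Rellich--Kondrachov theorem on the bounded domain $\OO$ to extract a subsequence, not relabelled, converging weakly in $W^{1,2}(\OO)$ and strongly in $L^2(\OO)$ to some limit $u$. The strong $L^2$ convergence yields both $\dm u\dm_2=1$ and $(u_N,a^j)\to(u,a^j)$ for each fixed $j$. Now fix an arbitrary $M\in\N$ and retain only indices $N\ge M$: dropping the nonnegative gradient term and using that the partial sums are increasing in the number of terms, the displayed inequality gives $(1+\vep_0)\sum_{j=1}^M(u_N,a^j)^2<1$. Letting $N\to\infty$ along the subsequence, this \emph{finite} sum passes to the limit and I obtain $(1+\vep_0)\sum_{j=1}^M(u,a^j)^2\le1$. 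Finally, sending $M\to\infty$ and invoking Parseval's identity $\sum_{j=1}^\infty(u,a^j)^2=\dm u\dm_2^2=1$, I conclude $(1+\vep_0)\le1$, i.e. $\vep_0\le0$, which contradicts $\vep_0>0$.

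The only place where the hypotheses really bite is the compactness step: boundedness of $\OO$ is precisely what guarantees that the bounded sequence $\{u_N\}$ possesses an $L^2$-convergent subsequence, and it is this strong convergence that simultaneously fixes the normalization $\dm u\dm_2=1$ in the limit and legitimizes passing to the limit in each inner product $(u_N,a^j)$. The remaining delicate point is purely one of ordering the limits correctly: I must send $N\to\infty$ first with $M$ held fixed, so that only finitely many terms are ever involved and no convergence of the infinite series is presumed, and only afterwards let $M\to\infty$ to bring in Parseval's identity. Interchanging these two limits would not be justified, so keeping them nested in this order is the crux of the argument.
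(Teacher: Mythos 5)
Your proof is correct, and in fact the paper offers no proof of this lemma at all: it is stated as a classical result (Friedrichs's lemma), with the literature pointer appearing only afterwards in the proof of Corollary~\ref{CFL}, where the bounded case is attributed to \cite{Ld}. Your compactness argument --- negate the statement, normalize $\dm u_N\dm_2=1$, extract the gradient bound $\dm\nabla u_N\dm_2^2<\vep_0^{-1}$, pass to a subsequence converging strongly in $L^2(\OO)$, take $N\to\infty$ at fixed $M$ and only then $M\to\infty$ via Parseval --- is the standard modern route, and your care with the order of the two limits is exactly right. One point deserves correction, though it is harmless in context: your closing claim that ``boundedness of $\OO$ is precisely what guarantees'' the compact embedding $W^{1,2}(\OO)\hookrightarrow L^2(\OO)$ overstates matters. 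For the full space $W^{1,2}(\OO)$ (as opposed to $W_0^{1,2}(\OO)$), boundedness alone is not sufficient; one needs some boundary regularity, e.g.\ a Lipschitz boundary, the cone or segment condition, or an extension property --- there are bounded domains with exponential cusps for which the embedding fails to be compact. Since the paper assumes throughout that $\po$ is smooth, Rellich--Kondrachov applies and your argument goes through verbatim, but the hypothesis doing the work is ``bounded with (smooth) boundary,'' not boundedness per se. It is also worth noting that Friedrichs's original argument is constructive (partitioning $\OO$ into small cubes and comparing with piecewise-constant or finite Fourier approximations), which yields the basis elements more explicitly; your compactness proof is shorter but non-quantitative in $N$, a trade-off that is immaterial for the use made of \rf{FRI} in Corollary~\ref{CFL}.
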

\begin{coro}\label{CFL}{\sl Assume that $\{u^k(t,x)\}$ is a sequence with 
\be\label{CFL-I}\intll0T \dm u^k(t)\dm_{{W^{1,2}(\OO)}}^2dt+\esssup_{(0,T)}\dm u^k(t)\dm_2\leq M<\infty, \mbox{ for all }k\in\N\,,\ee
and
\be\label{CFL-II}\ba{l}\dm u^k(t)\dm_{L^2(|x|>R)}^2\leq\dm u^k_0\dm_{L^2(|x|>\frac R2)}^2+c(t)\psi(R),\mbox{ for all }k\in\N,\VS\hskip3cm\mbox{ with }c(t)\in L^\infty((0,T)),  \mbox{ and }\lim_{R\to\infty}\psi(R)=0.\ea\ee
Also,  assume that 
\be\label{CFL-III}u^k_0\to u_0\mbox{ strongly in\,}L^2(\OO)\mbox{ and,  a.e. in }t\!\in\!(0,T),\,u^k(t)\to u(t)\mbox{ weakly in\,}L^2(\OO)\,.\ee   
Then there exists a subsequence of $\{u^k\}$ strongly converging to $u$ in $L^2(0,T;L^2(\OO))$\,.}\end{coro}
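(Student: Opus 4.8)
The plan is to split $\OO$ into the bounded region $\OO_R:=\OO\cap B_R$ and the exterior tail $\{|x|>R\}$, and to estimate the two contributions to $\intll0T\dm u^k(t)-u(t)\dm_2^2\,dt$ separately. On the tail, hypotheses \rf{CFL-II} and \rf{CFL-III} will give smallness \emph{uniformly in} $k$; on the bounded part, where the domain is compactly contained and Friedrichs's Lemma \ref{FR} is available, the strong $L^2$ convergence will be reduced to the convergence of finitely many scalar products, which follows from the pointwise-in-time weak convergence. The whole argument is an $\vep$--$\delta$ selection: first fix $R$ to kill the tails, then fix the number $N$ of basis elements (equivalently the parameter in Friedrichs's lemma) to absorb the gradient remainder, and only at the end let $k\to\infty$.

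For the tail, the key remark is that strong convergence $u^k_0\to u_0$ in $L^2(\OO)$ forces uniform integrability of $\{|u^k_0|^2\}$; in particular $\sup_k\dm u^k_0\dm_{L^2(|x|>\rho)}^2\to0$ as $\rho\to\infty$, since for finitely many indices this is absolute continuity of a single $L^2$ function, while for the remaining ones $\dm u^k_0\dm_{L^2(|x|>\rho)}\le\dm u_0\dm_{L^2(|x|>\rho)}+\dm u^k_0-u_0\dm_2$. Feeding this into \rf{CFL-II}, and using $\intll0T c(\tau)\,d\tau\le T\dm c\dm_{L^\infty(0,T)}$ together with $\psi(R)\to0$, I obtain $\sup_k\intll0T\dm u^k(t)\dm_{L^2(|x|>R)}^2\,dt\to0$ as $R\to\infty$. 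The corresponding bound for the limit $u$ comes for free from weak lower semicontinuity of the norm: since $u^k(t)\tow u(t)$ in $L^2(\OO)$ for a.e.\ $t$, one has $\dm u(t)\dm_{L^2(|x|>R)}\le\liminf_k\dm u^k(t)\dm_{L^2(|x|>R)}$, and Fatou's lemma transfers the uniform tail smallness to $u$. Thus, given $\vep>0$, a single choice of $R$ makes both $\intll0T\dm u^k\dm_{L^2(|x|>R)}^2dt$ (for all $k$) and $\intll0T\dm u\dm_{L^2(|x|>R)}^2dt$ smaller than $\vep$.

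On $\OO_R$ I apply Lemma \ref{FR}, with $\{a^j\}$ an orthonormal basis of $L^2(\OO_R)$, to $w:=u^k-u$: for every $\delta>0$ there is $N$ with $\dm w\dm_{L^2(\OO_R)}^2\le(1+\delta)\sum_{j=1}^N(w,a^j)^2+\delta\dm\n w\dm_{L^2(\OO_R)}^2$. Integrating in $t$, the gradient remainder is controlled once and for all by \rf{CFL-I}: $\intll0T\dm\n w\dm_{L^2(\OO_R)}^2dt\le 2\intll0T(\dm\n u^k\dm_2^2+\dm\n u\dm_2^2)dt\le cM^2$, the bound for $u$ following from weak lower semicontinuity of the $L^2(0,T;W^{1,2})$ norm (the uniform bound \rf{CFL-I} guarantees $u\in L^2(0,T;J^{1,2}(\OO))$). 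For the finite sum, extending each $a^j$ by zero to an element of $L^2(\OO)$, the term $(u^k(t)-u(t),a^j)\to0$ for a.e.\ $t$ by \rf{CFL-III}, while $|(u^k(t)-u(t),a^j)|^2\le 4M^2\dm a^j\dm_2^2$ is a fixed constant majorant on $(0,T)$; as only finitely many $j$ occur, dominated convergence yields $\intll0T\sum_{j=1}^N(u^k(t)-u(t),a^j)^2dt\to0$ as $k\to\infty$.

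Collecting the pieces, for the $R$ chosen in the tail step and the $\delta$ (hence $N$) chosen to make $\delta cM^2<\vep$, one gets for all large $k$ that $\intll0T\dm u^k-u\dm_2^2dt$ is bounded by a fixed multiple of $\vep$, whence the claimed strong convergence in $L^2(0,T;L^2(\OO))$, a subsequence being extracted if \rf{CFL-III} is only available along one. The routine parts are the two splittings and the dominated-convergence passage; the genuine difficulty, and the reason the hypotheses are shaped as they are, is the \emph{tail} estimate on the unbounded domain: ordinary Rellich/Aubin--Lions compactness fails for exterior $\OO$, and it is precisely the uniform-in-$k$ decay encoded in \rf{CFL-II}, combined with the uniform integrability of the data from \rf{CFL-III}, that restores compactness at infinity and lets Friedrichs's lemma do the remaining local work.
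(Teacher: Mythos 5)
Your proposal is correct and follows essentially the same route as the paper's own proof: the tail on $\{|x|>R\}$ is killed uniformly in $k$ via \rf{CFL-II} together with the strong convergence of the data in \rf{CFL-III} (the paper handles the limit $u$ by absolute continuity of its integral rather than your Fatou/weak-lower-semicontinuity variant, an inessential difference), Friedrichs's Lemma \ref{FR} reduces the bounded piece $\OO\cap B_R$ to finitely many scalar products with the gradient remainder absorbed by \rf{CFL-I}, and dominated convergence with the constant majorant $cM^2\dm a^j\dm_2^2$ finishes the argument. The only cosmetic slip is invoking $u\in L^2(0,T;J^{1,2}(\OO))$ where no solenoidality is assumed in the corollary; $W^{1,2}(\OO)$ is what is meant and what your argument actually uses.
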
\bp The result for $\OO$ bounded is well known, a proof is given in \cite{Ld}. In the case of $\OO$ exterior, a proof is due to Leray in \cite{L}. For the sake of the completeness we furnish the following proof.
\par
Let $u$ be the weak limit of $\{u^k\}$ in $L^2(0,T;L^2(\OO))$. 
By virtue of \rf{CFL-II}, for any $R>0$ we have
$$\ba{l}\dy\vs1\intll0T\intl{|x|>R}|u^k-u|^2\,dx\,dt\le\intll0T\intl{|x|>R}|u^k|^2+|u|^2\,dx\,dt\\
\hfill\dy\le \dm u^k_0\dm_{L^2(|x|>\frac R2)}^2+\psi(R)\intll0T c(t)\,dt+\intll0T\intl{|x|>R}|u |^2\,dx\,dt\VS\leq 2\big[\dm u^k_0-u_0\dm_{L^2(|x|>\frac R2)}+\dm u_0\dm_{L^2(|x|>\frac R2)}\big]+\psi(R)\intll0T c(t)\,dt+\intll0T\intl{|x|>R}|u |^2\,dx\,dt.\ea$$
By \eqref{CFL-II} for $\psi(R)$ and \rf{CFL-III} for $u_0^k$, and by the absolute continuity of the integral, we get that, for any $\varepsilon>0$, there exist $\ov R$ and $\ov k$, such that
$$\intll0T\intl{|x|>R}|u^k-u|^2\,dx\,dt<\varepsilon\,\mbox{ for all }R>\ov R\mbox{ and }k>\ov k\,.$$
In the bounded set $\Omega\cap B_{  R}$ we apply Lemma \ref{FR} and we use estimate \eqref{CFL-I}, obtaining, for any $k\in\N$,
\be\label{uk-uomegaR}\intll0T\intl{\Omega\cap B_{  R}}|u^k-u|^2\,dx\,dt
\le(1+\varepsilon)\sum_{j=1}^N\intll0T\left[\;\intl{\Omega\cap B_{  R}}(u^k-u)a^j\,dx\right]^2\,dt+2M\varepsilon.\ee
By the uniform bound \eqref{CFL-I} we have that
$$\left[\;\intl{\Omega\cap B_{ R}}(u^k-u)a^j\,dx\right]^2\le2M^2\|a^j\|_2^2,$$
and we use the dominated convergence theorem to pass to the limit as $k\to\infty$ in \eqref{uk-uomegaR}.
The property \eqref{CFL-III} allows us to complete the proof. 
\ep
We recall also two basic results. 
\begin{lemma}\label{infinitynorm}
Let $\Omega$ be a measurable subset of $\R^n$ and let $v\in L^q(\Omega)$ for any $q\ge\ov q\ge1$. If $\liminf\limits_{q\to\infty}\|v\|_q=l$ then $v\in L^\infty(\Omega)$ and $\|v\|_\infty=l$. 
\end{lemma}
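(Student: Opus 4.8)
The plan is to establish the two inequalities $\|v\|_\infty\le l$ and $l\le\|v\|_\infty$ separately. The first one already delivers $v\in L^\infty(\OO)$, and combined with the second it pins down $\|v\|_\infty=l$.

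First I would prove $\|v\|_\infty\le l$ by contradiction. Suppose instead $\|v\|_\infty>l$ (this also covers the case $v\notin L^\infty(\OO)$, where $\|v\|_\infty=+\infty$), and fix a number $a$ with $l<a<\|v\|_\infty$. Set $E_a:=\{x\in\OO:|v(x)|>a\}$. By the very definition of the essential supremum $|E_a|>0$. The crucial point — and the only place where the hypothesis $v\in L^{\ov q}(\OO)$ with $\ov q$ \emph{finite} is really used — is that $E_a$ has \emph{finite} measure: Chebyshev's inequality gives $|E_a|\le a^{-\ov q}\|v\|_{\ov q}^{\ov q}<\infty$. Hence, for every $q\ge\ov q$,
\be
\|v\|_q\ge\Big(\int_{E_a}|v|^q\,dx\Big)^{1/q}\ge a\,|E_a|^{1/q},
\ee
and since $0<|E_a|<\infty$ we have $|E_a|^{1/q}\to1$ as $q\to\infty$, whence $\liminf_{q\to\infty}\|v\|_q\ge a>l$, contradicting the hypothesis. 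Therefore $\|v\|_\infty\le l<\infty$, which in particular shows $v\in L^\infty(\OO)$.

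For the reverse inequality I would interpolate between $L^{\ov q}$ and $L^\infty$. If $\|v\|_\infty=0$ then $v=0$ a.e., every norm vanishes, and $l=0=\|v\|_\infty$, so assume $\|v\|_\infty>0$. Since now $v\in L^\infty(\OO)\cap L^{\ov q}(\OO)$, for $q>\ov q$ we may write $|v|^q=|v|^{q-\ov q}|v|^{\ov q}\le\|v\|_\infty^{q-\ov q}|v|^{\ov q}$ a.e., and integrating yields
\be
\|v\|_q\le\|v\|_\infty^{\,1-\ov q/q}\,\|v\|_{\ov q}^{\,\ov q/q}.
\ee
Letting $q\to\infty$ (using $\ov q/q\to0$ and $0<\|v\|_{\ov q},\|v\|_\infty<\infty$) gives $\limsup_{q\to\infty}\|v\|_q\le\|v\|_\infty$. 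Chaining the three facts,
\be
l=\liminf_{q\to\infty}\|v\|_q\le\limsup_{q\to\infty}\|v\|_q\le\|v\|_\infty\le l,
\ee
forces equality throughout; in particular the limit $\lim_{q\to\infty}\|v\|_q$ exists and $\|v\|_\infty=l$.

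I do not expect a genuine obstacle, as this is a classical measure-theoretic fact. The single delicate step is the finiteness of $|E_a|$ in the first part: it must be extracted from membership in $L^{\ov q}$ for a \emph{finite} exponent rather than assumed, since otherwise $|E_a|^{1/q}$ need not converge to $1$ and the contradiction would break down.
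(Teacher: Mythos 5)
Your proof is correct, but it takes a genuinely different route from the paper's. The paper extracts an increasing sequence $q_h\to\infty$ realizing the liminf and, for an arbitrary $q$ with $q_h\le q<q_{h+1}$, interpolates $\|v\|_q\le\|v\|_{q_h}^{\theta_h}\|v\|_{q_{h+1}}^{1-\theta_h}\le l+\varepsilon$; this upgrades the liminf to a genuine limit, $\lim_{q\to\infty}\|v\|_q=l$, and the conclusion $\|v\|_\infty=l$ is then drawn from the classical identification $\|v\|_\infty=\lim_{q\to\infty}\|v\|_q$, whose proof the paper leaves implicit. You instead prove that classical identification from scratch and never need a subsequence: your Chebyshev/level-set argument (with $|E_a|$ finite precisely because $v\in L^{\ov q}$ for a \emph{finite} $\ov q$, so that $|E_a|^{1/q}\to1$) yields $\|v\|_\infty\le\liminf_{q\to\infty}\|v\|_q$, hence $v\in L^\infty(\OO)$, while interpolation between $L^{\ov q}$ and $L^\infty$ yields $\limsup_{q\to\infty}\|v\|_q\le\|v\|_\infty$; chaining these with the hypothesis forces equality throughout. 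Note that your two bounds in fact establish $\lim_{q\to\infty}\|v\|_q=\|v\|_\infty$ for \emph{every} $v\in L^{\ov q}(\OO)\cap L^\infty(\OO)$, so as a by-product you recover the paper's intermediate conclusion that the liminf is a full limit. What each approach buys: yours is fully self-contained and isolates exactly where the finite exponent $\ov q$ enters (finiteness of $|E_a|$, and of $\|v\|_{\ov q}$ in the interpolation step), including careful treatment of the degenerate cases $\|v\|_\infty=+\infty$ and $\|v\|_\infty=0$; the paper's is shorter modulo the standard limit fact, its only real content being the observation that interpolation propagates the bound $l+\varepsilon$ from the minimizing subsequence to all sufficiently large $q$.
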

\begin{proof}
There exists an increasing sequence $\{q_h\}$ such that $q_h\to\infty$ and
$\lim\limits_{h\to\infty}\|v\|_{q_h}=l$. Hence, for any $\varepsilon>0$ we can find $\bar h$ such that $\|v\|_{q_h}\le l+\varepsilon$ for any $h\ge\bar h$.
Moreover, if $q>q_{\bar h}$ we can find $h\ge \bar h$ such that $q_h\le q<q_{h+1}$. By interpolation, there exists $\theta_h\in[0,1]$ such that 
$$\|v\|_q\le \|v\|_{q_h}^{\theta_h}\|v\|_{q_{h+1}}^{1-\theta_h}\le l+\varepsilon.$$
It follows that $\|v\|_q\le l+\varepsilon$ for any $q>q_{\bar h}$. Hence $v\in L^\infty(\Omega)$ and
$$l=\liminf_{q\to\infty}\|v\|_q\le\limsup_{q\to\infty}\|v\|_q\le l+\varepsilon$$
for any $\varepsilon>0$. It follows that
$$\|v\|_\infty=\lim_{q\to\infty}\|v\|_q=l.$$
\end{proof}
\begin{lemma}\label{dominatedconv}
Let $\{g^k\}$ and $g$ be summable functions such that $g^k\to g$ almost everywhere and
$$\lim_{k\to\infty}\int g^k\,dx=\int g\,dx.$$
If $\{f^k\}$ and $f$ are measurable functions such that $|f^k|\le g^k$ almost everywhere and $f^k\to f$ almost everywhere, then
$$\lim_{k\to\infty}\int |f^k-f|\,dx=0.$$
\end{lemma}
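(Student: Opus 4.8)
The plan is to deduce the result from Fatou's lemma applied to a judiciously chosen sequence of nonnegative functions, in the spirit of Pratt's generalized dominated convergence theorem; the hypothesis on the integrals $\int g^k\,dx$ plays the role that a fixed dominating function plays in the classical statement.

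First I would record the pointwise facts that make the statement meaningful. Since $|f^k|\le g^k$ almost everywhere, each $g^k$ is nonnegative a.e., and passing to the limit along the a.e.\ convergences $f^k\to f$ and $g^k\to g$ yields $|f|\le g$ a.e.; in particular $g\ge 0$ a.e., $f$ is summable, and $\int|f^k-f|\,dx$ is finite for every $k$. From the triangle inequality together with the domination hypothesis one gets the crucial pointwise bound $|f^k-f|\le |f^k|+|f|\le g^k+|f|$ a.e., so that the functions
$$h^k:=g^k+|f|-|f^k-f|$$
are nonnegative a.e.\ and summable.

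Next I would apply Fatou's lemma to $\{h^k\}$. Because $f^k\to f$ and $g^k\to g$ a.e., we have $|f^k-f|\to 0$ and hence $h^k\to g+|f|$ a.e., so that $\liminf_{k\to\infty} h^k=g+|f|$ a.e.; Fatou then gives
$$\int (g+|f|)\,dx\le \liminf_{k\to\infty}\int h^k\,dx .$$
The right-hand side equals $\liminf_{k\to\infty}\bigl(\int g^k\,dx+\int|f|\,dx-\int|f^k-f|\,dx\bigr)$. Here I would use the hypothesis $\int g^k\,dx\to\int g\,dx$: since this sequence converges, it can be separated out of the $\liminf$, and by $\liminf(-a_k)=-\limsup a_k$ the right-hand side becomes $\int g\,dx+\int|f|\,dx-\limsup_{k\to\infty}\int|f^k-f|\,dx$.

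Comparing the two sides and cancelling the finite quantities $\int g\,dx$ and $\int|f|\,dx$ leaves $\limsup_{k\to\infty}\int|f^k-f|\,dx\le 0$, and since the integrands are nonnegative this forces $\lim_{k\to\infty}\int|f^k-f|\,dx=0$, as claimed. The one delicate point I would verify with care is the rearrangement of the $\liminf$ on the right: it is legitimate precisely because $\int g^k\,dx$ is a \emph{convergent} sequence, not merely a bounded one, and this is exactly where the assumption on the integrals of $g^k$ is consumed; dropping it would allow one only to bound, not to evaluate, that limit. The remaining verifications are routine checks that all functions in play are integrable, so that splitting the integral of $h^k$ never produces an $\infty-\infty$ ambiguity.
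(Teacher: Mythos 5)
Your proof is correct and follows essentially the same route as the paper's: both apply Fatou's lemma to a nonnegative sequence of the form $g^k+(\text{fixed summable function})-|f^k-f|$ and use the convergence of $\int g^k\,dx$ to extract it from the $\liminf$, concluding $\limsup_k\int|f^k-f|\,dx\le 0$. The only cosmetic difference is your choice of auxiliary function $|f|$ where the paper uses $g$ (i.e., $h^k=g^k+|f|-|f^k-f|$ versus $g+g^k-|f^k-f|$), which changes nothing of substance.
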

\begin{proof} The result of lemma is contained in Theorem\,1.20 of \cite{EG}.
\end{proof}
 It is well known that in \cite{CKN} and in \cite{scheffer} it is furnished an existence theorem of suitable weak solutions to the Navier-Stokes Cauchy problem. Here, in order to achieve the same result in the case of problem \rf{NS}, that is, in the case of the initial boundary value problem in bounded or exterior domains $\OO$, we give the chief steps of the proof in Lemma\,\ref{LJAM} and in the Appendix. For this goal we consider  a mollified Navier-Stokes system. Hence problem \rf{NS} becomes
\be\label{MNS}\ba{l}v_t^m+J_m[v^m]\cdot
\nabla v^m+\nabla\pi_{v^m}=\Delta
v^m,\;\nabla\cdot
v^m=0,\mbox{ in }(0,T)\times\OO,\VS v^m=0\mbox{ on }(0,T)\times\po,\hskip0.12cm
v^m(0,x)=v_0^m(x)\mbox{ on
}\{0\}\times\OO,\ea\ee where $J_m[\cdot]$ is a mollifier and  $\{v_0^m\}\subset J^{1,2}(\OO)$ converges to $v_0$ in $J^2(\OO)$. The result of existence is established proving that the sequence of solutions $\{(v^m,\pi_{v^m})\}$ to problem \rf{MNS} converges with respect to the metric stated in Definition\,\ref{WS}, as well as proving that the limit satisfies the energy inequality \rf{SEI}. All this is a consequence of the following 
\begin{lemma}\label{LJAM}{\sl There exists a sequence  of solutions $\{(v^m,\pi_{v^m})\}$ such that, for all $m\in\N$ and $T>0$, $v^m\in C([0,T);J^{1,2}(\OO))\cap L^2(0,T;W^{2,2}(\OO))$. Moreover, for $\OO$ exterior domain, 
for $\ov R$ sufficiently large, we get
\be\label{LDD} \dm v^m(t)\dm_{L^2(|x|>R)}^2\leq  \dm v^m_0\dm_{L^2(|x|>\frac R2)}^2+ c(t)\psi(R)\,\mbox{ for any }t>0, \,R>2\ov R\mbox{ and }m\in\N\,,\ee 
with $c(t)\in L^\infty(0,T)$ and $\psi(R)=o(1)$. }\end{lemma}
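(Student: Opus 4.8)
The plan is to construct the approximating sequence $\{(v^m,\pi_{v^m})\}$ by the Galerkin method applied to the mollified system \rf{MNS}, and then to establish the two stated conclusions: the global regularity $v^m\in C([0,T);J^{1,2}(\OO))\cap L^2(0,T;W^{2,2}(\OO))$, and the spatial decay estimate \rf{LDD} for exterior domains. First I would fix $m$ and note that the mollifier $J_m[\cdot]$ regularizes the convective term, so that the quadratic nonlinearity $J_m[v^m]\cdot\nabla v^m$ is much tamer than the genuine Navier--Stokes nonlinearity: $J_m[v^m]$ is smooth and, crucially, one has an $L^\infty$-type bound on $J_m[v^m]$ controlled by $\dm v^m\dm_2$ uniformly in the spatial variable. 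This is exactly what prevents finite-time blow-up and yields global-in-time smoothness for each fixed $m$.

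For the regularity statement, I would run the standard energy hierarchy. The basic energy estimate, obtained by testing \rf{MNS} with $v^m$, gives $v^m\in L^\infty(0,T;J^2(\OO))\cap L^2(0,T;J^{1,2}(\OO))$ with bounds depending only on $\dm v_0^m\dm_2$; here the mollified convective term drops out since $\nabla\cdot J_m[v^m]=0$. The next level, testing with $P\Delta v^m$ (equivalently estimating $\frac{d}{dt}\dm\nabla v^m\dm_2^2$), is where the mollification pays off: because $\dm J_m[v^m]\dm_\infty$ is bounded in terms of lower norms, the troublesome trilinear term $(J_m[v^m]\cdot\nabla v^m,P\Delta v^m)$ can be absorbed into $\dm P\Delta v^m\dm_2^2$ after Young's inequality, with no smallness requirement. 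This closes a differential inequality for $\dm\nabla v^m(t)\dm_2^2$ that is globally solvable, giving $v^m\in L^\infty_{loc}([0,T);J^{1,2}(\OO))$ and then $P\Delta v^m\in L^2(0,T;L^2(\OO))$; invoking Lemma\,\ref{TINT} together with elliptic (Stokes) regularity upgrades this to $v^m\in L^2(0,T;W^{2,2}(\OO))$, and a Galerkin/weak-limit argument yields the continuity $v^m\in C([0,T);J^{1,2}(\OO))$. The recovery of the pressure $\pi_{v^m}$ is then routine via the appendix results on the pressure field.

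For the decay estimate \rf{LDD}, with $\OO$ exterior and $\ov R>\mbox{diam}\,\OO^c$, I would localize the energy identity using a smooth cutoff $\eta_R$ that vanishes on $|x|<\frac R2$ and equals $1$ on $|x|>R$. Testing \rf{MNS} with $\eta_R^2 v^m$ (which is admissible since $\eta_R$ is supported away from the obstacle, so the boundary condition is respected) produces, after integration by parts, the localized energy on $|x|>R$ on the left, the corresponding initial localized energy $\dm v_0^m\dm_{L^2(|x|>\frac R2)}^2$, and a collection of commutator/remainder terms supported in the annulus $\frac R2<|x|<R$ where $\nabla\eta_R$ lives. Each such term carries a factor $\dm\nabla\eta_R\dm_\infty\sim R^{-1}$ (or is otherwise controlled by integrals over the annulus), and using the uniform energy bound together with the uniform control on $J_m[v^m]$ and on the pressure one bounds all remainders by $c(t)\psi(R)$ with $\psi(R)=o(1)$ and $c(t)\in L^\infty(0,T)$, uniformly in $m$; dealing with the pressure term $(\pi_{v^m},\nabla\cdot(\eta_R^2 v^m))=(\pi_{v^m},v^m\cdot\nabla\eta_R^2)$ requires the decay properties of $\pi_{v^m}$ at spatial infinity recalled in the appendix.

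The main obstacle I anticipate is obtaining the decay estimate \rf{LDD} \emph{uniformly in $m$}: the cutoff argument generates a pressure term and a convective term in the annulus, and controlling these uniformly in $m$ (rather than just for each fixed $m$) demands that the bounds on $\pi_{v^m}$ and on $J_m[v^m]$ be $m$-independent, which forces one to work only with the basic energy-level estimates that are stable under $m\to\infty$, not with the higher regularity bounds that may degenerate. Getting $\psi(R)=o(1)$ with a constant $c(t)$ that is genuinely in $L^\infty(0,T)$ uniformly in $m$, while handling the far-field behavior of the pressure in the exterior geometry, is the delicate technical heart of the lemma.
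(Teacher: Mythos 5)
Your first half (Galerkin construction plus the energy hierarchy for fixed $m$, using the $m$-dependent bound $\|J_m[v^m]\|_\infty\le c_m\|v^m\|_2$ to close the $\frac{d}{dt}\|\nabla v^m\|_2^2$ inequality globally) is the standard argument and is consistent with the paper, which does not reprove it but cites Theorem~3 of \cite{Hy} and \cite{CF}. Your skeleton for \rf{LDD} is also the paper's: multiply \rf{MNS}$_1$ by $h_Rv^m$ with $h_R=1$ on $|x|>R$, $h_R=0$ on $|x|<\frac R2$, and the kinetic remainders in the annulus are indeed handled at energy level, via $\|J_m[v^m]\|_3\le\|v^m\|_3$, Gagliardo--Nirenberg and the energy inequality, exactly as you indicate.

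The genuine gap is the pressure term $\int\pi_{v^m}\,v^m\cdot\nabla h_R\,dx$. You dispose of it by invoking ``uniform control on the pressure'' and ``decay properties of $\pi_{v^m}$ at spatial infinity recalled in the appendix,'' but those properties are precisely the content of the proof you were asked to supply: in the paper the appendix exists in order to \emph{establish} them, and nothing at energy level gives any pressure bound at all --- in an exterior domain one cannot write $\pi$ via Riesz transforms of $v_iv_j$, and no $m$-uniform, quantitatively $R$-dependent bound on $\pi_{v^m}$ over the annulus follows from \rf{EI}. The missing mechanism is the decomposition $v^m=U^m+W^m$, $\pi_{v^m}=\pi^1_{v^m}+\pi^2_{v^m}$, where $U^m$ solves the homogeneous Stokes problem \rf{ST} with data $v_0^m$ and $W^m$ solves \rf{STF} with force $F^m=J_m(v^m)\cdot\nabla v^m$. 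The harmonic pressure $\pi^1$ is estimated through Solonnikov's bound \rf{NP-I} for the Neumann problem $\frac{d\pi}{d\nu}=\nu\cdot\nabla\times a$ with $a=\mbox{curl}\,U^m$, giving $\|\pi^1(t)\|_{L^2(\OO\cap B_{R_0})}\le c(T)\|v_0\|_2\,t^{-1+\frac\beta4}$ (a time singularity that is integrable, which is what makes $c(t)\in L^\infty(0,T)$), and its far field is controlled in $L^{\ov r}(|x|>R_0)$, $\ov r\in(3,6)$, by a representation formula after cutting off. For $\pi^2$, the energy bound alone gives $F^m\in L^r(0,T;L^s(\OO))$ with $\frac3s+\frac2r=4$, $s\in(1,\frac32)$, \emph{uniformly in $m$}, so maximal regularity \rf{EXF} yields $\|\nabla\pi^2\|_{L^r(L^s)}\le c\|v_0\|_2^2$; one must then subtract a function of time $\pi^0_{v^m}(t)$ (the pressure is only defined modulo such functions, and they do not affect the weak formulation) to get $\pi^2-\pi^0\in L^r(0,T;L^{\frac{3s}{3-s}}(\OO))$ via the Sobolev embedding. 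Only with these estimates in hand does H\"older's inequality on the annulus produce the factors $R^{3\frac{\ov r-2}{2\ov r}}$ and $R^{\frac{5s-6}{2s}}$, whose exponents are strictly less than $1$ precisely because $\ov r<6$ and $s<\frac32$, so that after multiplication by $R^{-1}$ one obtains $\psi(R)=o(1)$. This exponent bookkeeping, which is where the stated ranges of $\ov r$ and $s$ enter, is also absent from your proposal; without it the claim $\psi(R)=o(1)$ with $c(t)\in L^\infty(0,T)$ is asserted rather than proved.
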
\bp The above result is well known. The existence and uniqueness of the solutions and related properties of regularity can be proved as in Theorem\,3 of \cite{Hy} (see also \cite{CF}). Concerning estimate \rf{LDD}, in the case of the Cauchy problem
 it was due   to Leray in \cite{L}. Subsequently  the result is extended to the initial boundary value problem in exterior domains by several authors, in different contexts. Actually, the technique employed by the authors  is essentially the same.   In this connection, without the aim of being exhaustive, we refer to \cite{GM,MS}. In Appendix we give  the details of the proof of \rf{LDD}.\ep 
\begin{lemma}\label{CL}{\sl For all $T>0$     the sequence of solutions to problem \rf{MNS} furnished by Lemma\,\ref{LJAM}, uniformly in $m\in\N$, satisfies the estimate
\be\label{CL-I} \left(\intll0{T}\left(\dm P\Delta v^m(t)\dm_2^2+\dm v^m_t(t)\dm_2^2\right)^{\frac13}dt\right)^3\leq c\left( \frac 1{ 1+\dm \n v^m(T)\dm_2^2}+ \dm v_0\dm_2^6\right) .\ee }
\end{lemma}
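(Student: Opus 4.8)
The plan is to derive the estimate \rf{CL-I} from an energy-type identity obtained by testing the mollified equation \rf{MNS} against $P\Delta v^m$. First I would fix $m$ and, for brevity, write $u=v^m$. Taking the $L^2(\OO)$ scalar product of the momentum equation in \rf{MNS} with $P\Delta u$, and using that $P$ is the orthogonal projector onto $J^2(\OO)$ so that $(\n\pi_{v^m},P\Delta u)=0$ and $(u_t,P\Delta u)=(u_t,\Delta u)=-\frac12\frac d{dt}\dm\n u\dm_2^2$, one obtains the differential identity
\be\label{planA}\frac12\frac d{dt}\dm\n u(t)\dm_2^2+\dm P\Delta u(t)\dm_2^2=-\big(J_m[u]\cdot\n u,P\Delta u\big).\ee
Since $(u_t,P\Delta u)=(u_t,\Delta u)$ because $u_t\in J^2(\OO)$, the same scalar product also gives $\dm u_t\dm_2^2=\dm P\Delta u\dm_2^2-2(J_m[u]\cdot\n u,P\Delta u)-\dm J_m[u]\cdot\n u\dm_2^2+\dots$; more simply, from the equation $u_t=\Delta u-J_m[u]\cdot\n u-\n\pi_{v^m}$ one controls $\dm u_t\dm_2$ by $\dm P\Delta u\dm_2$ plus the convective term after projecting, so the left-hand side of \rf{CL-I} is governed by $\dm P\Delta u\dm_2$.

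The heart of the matter is to bound the trilinear term on the right of \rf{planA}. Here I would exploit Lemma\,\ref{TINT}: with $n=3$, choosing the interpolation exponents so that $\dm u\dm_r$ with a suitable $r$ is estimated by $\dm P\Delta u\dm_2^a\dm u\dm_2^{1-a}$, together with the standard bound $|(J_m[u]\cdot\n u,P\Delta u)|\le\dm J_m[u]\dm_r\dm\n u\dm_{r'}\dm P\Delta u\dm_2$ and the smoothing property $\dm J_m[u]\dm_r\le\dm u\dm_r$ of the mollifier. After applying Lemma\,\ref{TINT} to the factors and using Young's inequality, I expect the convective term to be absorbed as $\frac12\dm P\Delta u\dm_2^2$ on the left, leaving a remainder of the form $c\,\dm\n u\dm_2^\beta\dm u\dm_2^\gamma$ with $\beta$ strictly less than the power needed to make the inequality linear; the precise exponents are dictated by the scaling relation in \rf{INTIII} with $a=\frac34$ (the critical 3D exponent giving $\dm\n u\dm_2^2\,$-type growth). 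This yields a differential inequality of the shape
\be\label{planB}\frac{d}{dt}\dm\n u(t)\dm_2^2+\dm P\Delta u(t)\dm_2^2\le c\,\dm\n u(t)\dm_2^{6}\quad\text{(modulo lower-order terms)}.\ee

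From \rf{planB} the claimed time-integral bound with the $\frac13$-power follows by an integration argument tuned to produce the exponent $\frac23=3\cdot\frac13-\dots$ appearing implicitly in \rf{CL-I}. Concretely, I would not integrate \rf{planB} directly but rather multiply by an integrating factor, or equivalently estimate $\dm P\Delta u\dm_2^2$ in $L^{1/3}_t$ by Hölder against the energy bound $\intll0T\dm\n u\dm_2^2\,dt\le\frac12\dm v_0\dm_2^2$ coming from the basic energy inequality \rf{EI} applied to the approximations, and against the pointwise control of $\dm\n u(T)\dm_2^2$. The term $\frac1{1+\dm\n v^m(T)\dm_2^2}$ on the right of \rf{CL-I} signals that one integrates the differential identity \rf{planA} after dividing by $(1+\dm\n u\dm_2^2)$ to a suitable power, so that the left endpoint contribution telescopes into this bounded reciprocal while the convective remainder integrates to the $\dm v_0\dm_2^6$ term via the uniform energy estimate. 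The main obstacle I anticipate is precisely the bookkeeping of exponents: verifying that the critical 3D interpolation from Lemma\,\ref{TINT} closes with the correct powers so that the convective term is genuinely absorbed and the surviving right-hand side integrates to exactly the $\frac13$-power form in \rf{CL-I}, uniformly in $m$, using that $\dm J_m[u]\dm\le\dm u\dm$ in every relevant norm so no $m$-dependent constant is introduced.
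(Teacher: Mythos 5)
Your proposal is correct and follows essentially the same route as the paper: the paper multiplies the equation by $P\Delta v^m-v^m_t$ (which captures $\|v^m_t\|_2^2$ at once, rather than recovering it from the equation as you suggest), bounds the convective term via Lemma~\ref{TINT} with $r=\infty$, $q=6$ to get $\|v^m\cdot\nabla v^m\|_2\le c\|P\Delta v^m\|_2^{1/2}\|\nabla v^m\|_2^{3/2}$, absorbs by Young to reach exactly your differential inequality with right-hand side $c\|\nabla v^m\|_2^6$, then divides by $\left(1+\|\nabla v^m\|_2^2\right)^2$, integrates in time, and concludes by the reverse H\"older inequality with exponents $\frac13$ and $-\frac12$ combined with the energy inequality \rf{EI} --- precisely the integrating-factor/H\"older mechanism you describe. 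The only slip is cosmetic: the correct interpolation exponent in \rf{INTIII} for $r=\infty$, $q=6$, $n=3$ is $a=\frac12$, not $\frac34$, which is exactly what produces the $\|\nabla v^m\|_2^6$ growth you state.
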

\begin{proof} 
By virtue of the regularity of $(v^m,\pi_{v^m})$ stated in Lemma\,\ref{LJAM}, we multiply equation \rf{MNS}$_1$ by $P\Delta v^m-v^m_t$. Integrating by parts on $\OO$, and applying the H\"older inequality, we get 
\be\label{PDelta-vt}\dm P\Delta v^m-v^m_t\dm_2\leq\dm v^m\cdot\n v^m\dm_2\,,\mbox{ a.e. in }t>0\,.\ee
Applying inequality \rf{INTIII}
with $r=\infty$ and $q=6$, by virtue of the Sobolev inequality, we obtain 
\be\label{NLT}\dm v^m\cdot\n v^m\dm_2\leq \dm v^m\dm_\infty\dm \n v^m\dm_2\leq c\dm  P\Delta v^m\dm_2^\frac12\dm \n v^m\dm_2^\frac32.\ee  
By inequalities \eqref{PDelta-vt} and \eqref{NLT}, we get 
\be\label{ddtnablavm-I}\ba{l}\dy\vs1\mbox{$\frac d{dt}$}\dm \n v^m\dm_2^2+\dm P\Delta v^m\dm_2^2+\dm v_t^m\dm_2^2 =\dm P\Delta v^m-v^m_t\dm_2^2\leq c\dm  P\Delta v^m\dm_2\dm \n v^m\dm_2^3\\
\hfill\dy\le\frac12\|P\Delta v^m\|_2^2+
c\dm \n v^m\dm_2^6,  \ea\ee
for all $m\in\N$ and a.e. in $t>0\,$.
We can divide by $(1+\dm \n v^m(t)\dm_2^2)^2$, and  the following holds
$$\frac{\mbox{$\frac d{dt}$}\dm \n v^m\dm_2^2}{(1+\dm \n v^m\dm_2^2)^2\hskip-0.1cm\null}\hskip0.1cm+\frac{\frac12\dm P\Delta v^m\dm_2^2+\dm v_t^m\dm_2^2}{(1+ \dm \n v^m\dm_2^2)^2}\leq c\dm \n v^m\dm_2^2\,.$$ 
Integrating on $(0,T)$,   we have
$$\frac1{1+ \|\nabla v^m_0\|_2^2}-\frac1{1+ \|\nabla v^m(T)\|^2_2}+\int_0^T\frac{\frac12\dm P\Delta v^m\dm_2^2+\dm v_t^m\dm_2^2}{(1+ \dm \n v^m\dm_2^2)^2}\,dt\le c \int_0^T\|\nabla v^m\|_2^2\,dt.
$$
Employing  the reverse H\"older inequality (see \cite[Theorem 2.12]{AF}) with exponents $\frac13$ and $-\frac12$, we get
$$\int\limits_0^T\frac{\frac12\dm P\Delta v^m\dm_2^2+\dm v_t^m\dm_2^2}{(1+ \dm \n v^m\dm_2^2)^2}\,dt\ge \Big[\int\limits_0^T\big[\mbox{$\frac12$}\|P\Delta v^m\|_2^2+\|v_t^m\|_2^2\big]^{\frac13}dt\Big]^{\hskip-1pt 3}\Big[\int\limits_0^T (1+\|\nabla v^m\|_2^2)  dt\Big]^{\hskip-1pt -2}\!.$$
Coupling the above inequalities with the energy inequality \rf{EI}, estimate \rf{CL-I} follows.  
\end{proof}
\section{Proof of Theorem\,\ref{CT}} The idea of the proof is the following. We consider the sequence of solutions to problem \rf{MNS} furnished by Lemma\,\ref{LJAM}. It is well known that there exists a subsequence $\{(v^m,\pi_{v^m})\}$ whose weak limit   $(v,\pi_v)$ in $L^2(0,T;J^{1,2}(\OO))$ is a weak solution in the sense of  Definition\,\ref{SWS}. All this is contained in \cite{L} or, for example, also in \cite{CKN}. Now, our aim is to prove further estimates on the extract $\{(v^m,\pi_{v^m})\}$ that ensure the thesis of Theorem\,\ref{CT}. \subsection{The strong convergence in $L^p(0,T;L^2(\OO))$  for all $p\in[1,2)$}   We start by proving that the sequence $\{v^m\}$ strongly converges in $L^p(0,T;W^{1,2}(\OO))$, for $p\in [1,2)$ and for all $T>0$. We recall that    
$$\dm \n u\dm_2\leq \dm P\Delta u\dm_2^\frac12\dm u\dm_2^\frac12\,, \mbox{ for all }u\in W^{2,2}(\OO)\cap J^{1,2}(\OO)\,.$$
Hence, integrating on $(0,T)$ and applying the H\"older inequality, we get 
$$\intll0T\dm \n v^k(t)-\n v^m(t)\dm_2\,dt\leq \Big[\intll0T\dm P\Delta v^k(t)-P\Delta v^m(t)\dm_2^\frac23dt\Big]^\frac34\Big[\intll0T\dm v^k(t)-v^m(t)\dm_2^2\,dt\Big]^\frac14\,.$$ 
By virtue of Lemma\,\ref{CL},    we get the existence of a $M(T)$ such that
$$\intll0T\dm \n v^k(t)-\n v^m(t)\dm_2\,dt\leq (2M(T))^\frac34\Big[\intll0T\dm v^k(t)-v^m(t)\dm_2^2\,dt\Big]^\frac14\,, \mbox{ for all, } k,m\in\N\,,$$
and, via \rf{LDD}, we can apply Corollary\,\ref{CFL} to deduce the strong convergence of the sequence $\{v^m\}$ in $L^1(0,T;W^{1,2}(\OO))$. Since the energy inequality holds uniformly with respect to $m\in\N$, by interpolation we arrive at the strong convergence of $\{v^m\}$ in $L^p(0,T;W^{1,2}(\OO))$, for any $p\in [1,2)$. 
In order to identify the limit point, we remark that $\{v^m\}$ weakly converges to $v$ in $L^2\left(0,T;W^{1,2}(\Omega)\right)$, hence $v$ has to coincide with the strong limit in each space $L^p\left(0,T;W^{1,2}(\Omega)\right)$.
Thus for all $T>0$  we deduce that  
\be\label{Clim}\ba{ll}\displ\intll st\dm \n v(\tau)\dm_2^2d\tau\hskip-0.2cm&\displ=\lim_{p\to2^-}\intll st\dm\n v(\tau)\dm_2^pd\tau\\&\displ=\lim_{p\to2^-}\lim_{m\to\infty}\intll st\dm \n v^m(\tau)\dm_2^pd\tau\,,\mbox{ for all }t,s\in(0,T) \,.\ea\ee \subsection{Proof of formula \rf{EE-I}}
By the strong convergence in $L^1\left(0,T;W^{1,2}(\Omega)\right)$, there exists a negligible set (for the Lebesgue measure) $\mathcal I\subset (0,T)$, such that 
for any $t\in\mathcal G_1:=(0,T)-\mathcal I$ the following limits are finite 
\be\label{Lpun}\lim_{m\to\infty}\dm  v^m(t)\dm_2=\dm  v(t)\dm_2\mbox{\; and \;}\lim_{m\to\infty}\dm \n v^m(t)\dm_2=\dm \n v(t)\dm_2\,.\ee
From the energy equality for the approximating solutions $\{v^m\}$ we obtain, for any $t\in \mathcal G_1$ and any $\alpha,K>0$,
\be\label{FS}\frac 1{\left(K+\dm \n v^m(t)\dm_2^2\right)^\alpha\hskip-0.1cm\null}\hskip0.1cm\frac d{dt}\dm v^m(t)\dm_2^2+\frac{2\dm \n v^m(t)\dm_2^2}{\left(K+\|\nabla v^m(t)\|_2^2\right)^\alpha}=0\,.\ee
Integrating by parts we get
$$\ba{l}\dy\vs1\alpha\intll st\frac{\dm v^m(\tau)\dm_2^2}{\left(K+\dm\n v^m(\tau)\dm_2^2\right)^{\alpha+1}}\hskip0.3cm\frac d{d\tau} {\dm\n v^m(\tau)\dm_2^2\hskip-0.2cm}\hskip0.2cmd\tau+2\intll st\frac{\dm \n v^m(\tau)\dm_2^2}{\left(K+\|\nabla v^m(\tau)\|_2^2\right)^\alpha}\,d\tau\\
\hfill\dy= \frac{\dm v^m(s)\dm_2^2}{\left(K+\dm\n v^m(s)\dm_2^2\right)^\alpha\hskip-0.2cm\null}\hskip0.1cm-\frac{\dm v^m(t)\dm_2^2}{\left(K+\dm\n v^m(t)\dm_2^2\right)^\alpha\hskip-0.2cm}\hskip0.2cm\,.\ea$$ 
We remark that, for almost every $\tau\in(0,T)$, by  \eqref{Lpun},
$$\frac{\|\nabla v(\tau)\|_2^2}{\left(K+\|\nabla v(\tau)\|_2^2\right)^\alpha}\leftarrow\frac{\|\nabla v^m(\tau)\|_2^2}{\left(K+\|\nabla v^m(\tau)\|_2^2\right)^\alpha}\le \|\nabla v^m(\tau)\|_2^{2-2\alpha}\rightarrow\|\nabla v(\tau)\|_2^{2-2\alpha}$$
and that, for $\alpha\in\left(0,\frac12\right]$, by virtue of the strong convergence in $L^{2-2\alpha}\left(0,T;W^{1,2}(\Omega)\right)$,
$$\lim_{m\to\infty}\int_s^t\|\nabla v^m(\tau)\|_2^{2-2\alpha}\,d\tau=\int_s^t\|\nabla v(\tau)\|_2^{2-2\alpha}\,d\tau.$$
Hence we can apply Lemma \ref{dominatedconv} to obtain that, for any $t,s\in \mathcal G_1$,
\be\label{beforelimalpha} \ba{l}\dy\vs1
\lim_{m\to\infty}\alpha\intll st\frac{\|v^m(\tau)\dm_2^2}{\left(K+\|\nabla v^m(\tau)\|_2^2\right)^{\alpha+1}}\frac d{d\tau}\|\nabla v^m(\tau)\|_2^2\,d\tau+2\intll st\frac{\dm \n v (\tau)\dm_2^2}{\left(K+\|\nabla v(\tau)\|_2^2\right)^\alpha}\,d\tau\\
\hfill\dy= \frac{\dm v(s)\dm_2^2}{\left(K+\dm\n v(s)\dm_2^2\right)^\alpha\hskip-0.2cm\null}\hskip0.1cm-\frac{\dm v(t)\dm_2^2}{\left(K+\dm\n v(t)\dm_2^2\right)^\alpha\hskip-0.2cm}\hskip0.2cm\,. \ea
\ee
Applying once again Lemma \ref{dominatedconv}, we get
$$\lim_{\alpha\to0}\intll st\frac{\dm \n v (\tau)\dm_2^2}{\left(K+\|\nabla v(\tau)\|_2^2\right)^\alpha\hskip-0.2cm}\hskip0.2cm\,d\tau=\intll st\dm \n v (\tau)\dm_2^2\,d\tau.$$
Then, letting $\alpha\to0$ in \eqref{beforelimalpha}, we deduce \rf{EE-I} with 
\be\label{CPEE}H(t,s):=\lim_{\alpha\to0} \lim_{m\to\infty}\alpha\intll st \frac{\|v^m(\tau)\dm_2^2}{\left(K+\|\nabla v^m(\tau)\|_2^2\right)^{\alpha+1}}\frac d{d\tau}\|\nabla v^m(\tau)\|_2^2\,d\tau\,.\ee\subsection{Proof of formula \rf{EE-II}} We denote by $\mathcal G_2$ the set of $t\geq0$ such that the estract $\{v^m\}$ is strongly convergent in $L^2(\OO)$. Recalling the definition of $\mathcal G_1$, we have $\mathcal G_1\subseteq\mathcal G_2$. By virtue of Lemma\,\ref{LA} we claim that $\dm \n v^m(t)\dm_2\ne0$ for all $t>0$ and $m\in \N$. Hence we consider formula \rf{FS} that rewrite with $K_1$ $$\frac 1{\left(K_1+\dm \n v^m(t)\dm_2^2\right)^\alpha\hskip-0.2cm\null}\hskip0.2cm\frac d{dt}\dm v^m(t)\dm_2^2+\frac{2\dm \n v^m(t)\dm_2^2}{\left(K_1+\|\nabla v^m(t)\|_2^2\right)^\alpha\hskip-0.2cm}\hskip0.2cm=0\,,$$ where, by  the above claim, we can consider  $K_1\geq0$. Integrating on $(s,t)$, for $s,t\in \mathcal G_2$, and applying the mean value theorem for the integrals, we get $$\frac 1{\big(K_1+\dm \n v(t_{\alpha,m})\dm_2^2\big)^\alpha\hskip-0.2cm}\hskip0.2cm= \big(\dm v^m(s)\dm_2^2-\dm v^m(t)\dm_2^2\big)^{-1} 2\intll st\frac{\dm \n v^m(\tau)\dm_2^2}{\left(K_1+\|\nabla v^m(t)\|_2^2\right)^\alpha\hskip-0.2cm}\hskip0.2cmd\tau\,.$$ Since the right hand side admits limit as $m\to\infty$ and as $\alpha\to0$,   the limit  $F(t,s):=\displ\lim_{\alpha\to0}\lim_{m\to\infty}\frac 1{\left(K_1+\|\nabla v^m(t_{\alpha,m})\|_2^2\right)^\alpha\hskip-0.2cm}\hskip0.2cm$  is well posed and \rf{EE-II} is proved.
\subsection{The $L^{\mu(q)}(0,T;L^q(\OO))$ property} 
By virtue of estimate \rf{INTIII} we get
$$\dm v^m(t)\dm_\infty\leq c\dm P\Delta v^m\dm_2^\frac12\dm\n v^m\dm_2^\frac12\,.$$ Employing the energy relation \rf{EI} and estimate \rf{CL-I}, applying  H\"older's  inequality,  for all $T>0$, we deduce that, for any $m\in\N$,
$$\intll0T\dm v^m(\tau)\dm_\infty d\tau\leq c\Big[\intll0T\dm P\Delta v^m(\tau)\dm_2^\frac23d\tau\Big]^\frac34\Big[\intll0T\dm \n v^m(\tau)\dm_2^2d\tau\Big]^\frac14\leq C(v_0),$$ 
where, here and in the following, $C(v_0)$ are constants depending only on $\|v_0\|_2$.
Therefore, by $L^p$-interpolation and recalling that $v^m\in L^\infty\left(0,T;J^2(\OO)\right)$, uniformly in $m\in\N$ and $T>0$, we arrive at
$$\intll0T\dm v^m(\tau)\dm_q^\frac q{q-2}d\tau\leq \sup_{(0,T)}\dm v^m(\tau)\dm_2^\frac2{q-2} \intll0t \dm v^m(\tau)\dm_\infty d\tau\leq C(v_0).$$
This allows us to claim that, for all $T>0$, the weak solution $v$ to problem \rf{NS}, limit of the sequence   $\{v^m\}$,    belongs to $L^\frac q{q-2}(0,T;L^q(\OO))$, for all $q\in(6,\infty)$, with 
$$\intll0T\dm v(\tau)\dm_q^\frac q{q-2}d\tau\leq  C(v_0).$$
This limit property and Fatou's lemma  ensure that, for all $T>0$ and for any sequence $q_h\to \infty$, the following estimate holds true
\be\label{mL}\intll0T{\liminf_{h\to\infty}}\dm v(\tau)\dm_{q_h}\leq	\lim_{h\to\infty} T^\frac2{q_h}C(\dm v_0\dm_2)^\frac{q_h-2}{q_h}=C(v_0)\,.\ee
The thesis of the theorem in the case $q=\infty$ follows straightforward by Lemma \ref{infinitynorm}.\chiu
\begin{rem}\label{2D}{\rm We verify that $H(t,s)=0$ in the case of $2D$-Navier-Stokes equations. It is important to realize the result in the framework of the construction given in the above proof, that is, not relying on the regularity of the limit  solution $v$. We start remarking that estimate \rf{NLT}, for $\Omega\subset\R^2$, via \rf{INTIII}, becomes 
$$\dm v^m\cdot\n v^m\dm_2\leq c\dm v^m\dm_2^\frac12\dm P\Delta v^m\dm_2^\frac12\dm \n v^m\dm_2\,.$$
Hence, in place of \eqref{ddtnablavm-I}, we deduce the differential inequality
\be\label{DD}\frac d{dt}\dm \n v^m\dm_2^2+\dm P\Delta v^m\dm_2^2+\dm v^m_t\dm_2^2\leq c\dm v^m\dm_2^2\dm \n v^m\dm_2^4\leq c\dm  v^m_0\dm_2^2\dm \n v^m\dm_2^4\,.\ee
Hence we achieve the result of Lemma \ref{CL} also for $\Omega\subset\R^2$, with the only difference that on the right hand side of estimate \eqref{CL-I} we have $\frac1{(1+\|\nabla v^m(T)\|_2^2)}+c\|v_0\|_2^2$.
By the same arguments of the three-dimensional case, we obtain that $\{v^m\}$ strongly converges in $L^p(0,T;W^{1,2}(\OO))$, for all $p\in [1,2)$, that is the key ingredient to arrive at the identity \rf{EE-I}. 
\par
Now we prove that $H(t,s)\le0$, which implies, by virtue of the energy inequality \rf{EI}, that $H(t,s)=0$. 
By \eqref{DD}, \rf{EI} and the H\"older inequality, we have
$$\ba{l}\dy\vs1\alpha\int_s^t\frac{\|v^m(\tau)\|_2^2}{\left(K+\dm \n v^m(\tau)\dm_2^2\right)^{\alpha+1}}\hskip0.1cm\frac d{d\tau}\dm \n v^m(\tau)\dm_2^2d\tau\\
\hfill\dy\vs1\le\alpha c\|v_0^m\|_2^2\int_s^t\|v^m(\tau)\|_2^2\frac{\|\nabla v^m\|_2^4}{\left(K+\|\nabla v^m\|_2^2\right)^{\alpha+1}}\,d\tau\le\alpha c\|v_0^m\|_2^4\int_s^t\|\nabla v^m(\tau)\|_2^{2-2\alpha}\,d\tau\\
\hfill\dy\le\alpha c\|v_0^m\|_2^4 (t-s)^{\frac1\alpha}\left(\int_s^t\|\nabla v^m(\tau)\|_2^2\,d\tau\right)^{1-\alpha}\le \alpha c\|v_0^m\|_2^{6-2\alpha}(t-s)^{\frac1\alpha}.
\ea$$
Passing to the limit for $m\to\infty$ and then for $\alpha\to0$, we get that $H(t,s)\le0$.}
\end{rem}\section{Appendix}
\subsection{\label{AX-I}Some results related to the construction of the weak solution}In this section we recall some results which are fundamental in order to construct a suitable weak solution. These results essentially concern estimates of the pressure field $\pi_{v^m}$ which appears in \rf{MNS}. Of course we look for estimates that are uniform with respect to $m\in\N$. Our aim is to justify estimate \eqref{LDD}.\par We start  by recalling that the energy relation holds uniformly in $m\in\N$:
$$\dm v^m(t)\dm_2^2+2\intll0t\dm\n v^m(\tau)\dm_2^2d\tau=\dm v^m_0\dm_2^2\leq\dm v_0\dm_2^2\,\mbox{ for all }t>0\,.$$
\par We introduce the following functionals:
$$\ba{ll}\lambda\in(0,1),\;q\in(1,\infty),\;\displ<a>_q^\lambda&\hskip-0.2cm\displ:=\Big[\intl{\po}\intl{\po}\frac{|a(x)-a(y)|^q}{|x-y|^{2+\lambda q}}d\sigma_yd\sigma_x\Big]^{\frac1q}\,,\VS \hskip3.2cm \dm a\dm_{W^{1-\frac1q,q}(\po)}&\hskip-0.2cm:=\dm a\dm_{L^q(\po)}+<a>_q^{1-\frac1q}\,.\ea$$ 
We consider the following Neumann problem: \be\label{NP}\Delta \pi=0,\;\pi\to0\mbox{ for }|x|\to\infty, \;\frac {d\pi}{d\nu}=\nu\cdot\n\times{a}\mbox{ on }\po\,.\ee \begin{lemma}\label{LA-I}{\sl In \rf{NP} assume $a\in W^{1-\frac1q,q}(\po)$. Then for all $\lambda\in(0,1-\mbox{$\frac1q$}]$ and $R_0$ sufficiently large there exists a constant $c$ independent of $a$ such that
\be\label{NP-I}  \dm \pi\dm_{L^q(\OO\cap B_{R_0})}\leq c<a>_q^{\lambda}\,.\ee}\end{lemma}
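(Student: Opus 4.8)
The plan is to solve the Neumann problem \rf{NP} by exploiting the special \emph{surface-curl} structure of the boundary datum $g:=\nu\cdot\n\times a$, and then to read off the local $L^q$-bound from $L^q$-regularity for the Neumann problem together with a trace/Poincar\'e argument on $\po$. Two structural facts drive everything. First, $g$ is a \emph{first-order tangential} operator applied to the tangential trace of $a$ (the normal component of the curl is the scalar surface rotational of $a_\tau$), so it loses exactly one derivative and maps $W^{\lambda,q}(\po)$ boundedly into $W^{\lambda-1,q}(\po)$. Second, since the curl annihilates constants, $g$ is unchanged when $a$ is replaced by $a-\ov a$ for any constant vector $\ov a$; moreover, being a surface divergence of a rotated tangential field on the closed surface $\po$, it satisfies $\int_{\po}g\,d\sigma=0$ automatically, which is precisely the compatibility condition for solvability.

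First I would record the reduction to the seminorm. Choosing $\ov a$ to be the mean value of $a$ over $\po$ and invoking the fractional Sobolev--Poincar\'e inequality on the compact manifold $\po$, one gets $\|a-\ov a\|_{W^{\lambda,q}(\po)}\le c<a>_q^\lambda$ for every $\lambda\in(0,1-\frac1q]$. Here the hypothesis $\lambda\le1-\frac1q$ guarantees $a\in W^{\lambda,q}(\po)$ and the finiteness of the right-hand side, while $\lambda>0$ is needed for the smoothing below. Thus it suffices to bound $\pi$ by $\|a-\ov a\|_{W^{\lambda,q}(\po)}$.

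Next, interpreting $g=\nu\cdot\n\times(a-\ov a)$ via the tangential calculus, an integration by parts on the closed surface $\po$ gives $\|g\|_{W^{\lambda-1,q}(\po)}\le c\,\|a-\ov a\|_{W^{\lambda,q}(\po)}$, so $g$ lies in the negative-order space $W^{\lambda-1,q}(\po)$ with $\lambda-1\in(-1,-\frac1q]$. I would then apply the $L^q$ theory of the Neumann problem: a harmonic function whose normal derivative lies in $W^{s,q}(\po)$ gains $1+\frac1q$ derivatives in the interior, so with $s=\lambda-1$ one obtains $\pi\in W^{\lambda+\frac1q,q}(\OO\cap B_{R_0})$, and since $\lambda+\frac1q>0$ the embedding $W^{\lambda+\frac1q,q}\hookrightarrow L^q$ on the bounded set $\OO\cap B_{R_0}$ yields \rf{NP-I}. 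The cleanest way to make this rigorous — handling the constant-independence and the decay $\pi\to0$ at once — is to represent the solution through the Neumann function $N(x,y)$ of the Laplacian, $\pi(x)=\int_{\po}N(x,y)g(y)\,d\sigma_y$, move the tangential derivative onto $N$, and estimate the resulting weakly singular kernel against $a-\ov a$.

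The main obstacle is the elliptic estimate in the exterior case: one must bound a purely \emph{local} quantity, $\|\pi\|_{L^q(\OO\cap B_{R_0})}$, solely by the boundary datum, with no global norm of $\pi$ entering. This requires combining interior regularity of the harmonic function $\pi$ with boundary regularity near $\po$, and using the decay $\pi\to0$ at infinity to absorb the far-field contribution; it is exactly here that the automatic compatibility $\int_{\po}g\,d\sigma=0$ and the uniqueness of the decaying solution are used. A secondary technical point is justifying the surface integration by parts for the low-regularity field $a\in W^{\lambda,q}(\po)$, which I would obtain by approximating $a$ with smooth tangential fields and passing to the limit through the bounded map $W^{\lambda,q}(\po)\to W^{\lambda-1,q}(\po)$.
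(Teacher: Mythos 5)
The paper offers no proof of Lemma \ref{LA-I}: it is quoted from Solonnikov \cite{Sl-R,Sl-E}, with \cite{MR-V} indicated for a recent proof by the same potential-theoretic technique, and your sketch is a faithful reconstruction of exactly that route. The two structural facts you isolate are the correct ones and are the heart of the cited proofs: $\nu\cdot\n\times a={\rm div}_\tau(a\times\nu)$ involves only tangential derivatives of the trace (the normal-derivative terms cancel by antisymmetry of $\epsilon_{ijk}\nu_i\nu_j$), it is invariant under $a\mapsto a-\ov a$ and has zero mean on the closed surface, so a fractional Poincar\'e inequality on the compact manifold $\po$ reduces everything to the seminorm $<a>_q^{\lambda}$ of arbitrarily small order $\lambda>0$ (this works even if $\po$ is disconnected, since the double integral defining $<a>_q^\lambda$ couples distinct components); and your derivative count $g\in W^{\lambda-1,q}(\po)$, hence $\pi\in W^{\lambda+\frac1q,q}$ near $\po$, hence $\pi\in L^q(\OO\cap B_{R_0})$, is the right one. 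What you state but do not execute is precisely the technical content supplied by \cite{Sl-R,Sl-E,MR-V}: the kernel bound $|\n_{\tau,y}N(x,y)|\leq c|x-y|^{-2}$ for the Neumann function and the resulting $L^q(\po)\to L^q(\OO\cap B_{R_0})$ estimate for the layer potential --- which is summable only because the target variable ranges over a three-dimensional set, the sliced boundary norms blowing up merely logarithmically in the distance to $\po$ --- together with the far-field control and uniqueness of the decaying solution in the exterior domain (where, incidentally, no compatibility condition is actually needed, since the decaying harmonic monopole can carry flux; the zero mean of $g$ is what matters in the bounded case, where one must also normalize $\pi$, the decay condition being vacuous there). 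So your proposal is a correct outline in the same spirit as the proofs the paper points to, rather than a self-contained alternative argument.
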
 The lemma is due to Solonnikov in \cite{Sl-R,Sl-E}. A recent proof of the same result, by similar techniques,  can be found, for example, in \cite{MR-V}. \par Applying the H\"older inequality and the Gagliardo trace theorem, one gets
\be\label{NP-II}\ba{ll}\dm \pi\dm_{L^q(\OO\cap B_{R_0})}&\hskip-0.2cm\leq c<a>_q^{\lambda}\leq c\dm a\dm_{L^q(\po)}^\beta\Big[<a>_q^{1-\frac1q}\Big]^{1-\beta}\VSE\leq c \Big[\dm a\dm_{L^q(\OO\cap B_{R_0})}+\dm a\dm_{L^q(\OO)}^\frac1{q'\hskip-0.1cm}\dm \n a\dm_q^\frac1q\Big]^\beta   \dm \n a\dm_q^ {1-\beta} \ea\ee with $\beta:=\frac{q(1-\lambda)-1}{1+q}$\,. Now, we consider $(U,\pi)$ as a solution to the Stokes problem \be\label{ST}\ba{l}U_t+\n\pi=\Delta U\,,\;\n\cdot U=0\,,\; \mbox{ in }(0,T)\times\OO\,,\VS U=0\mbox{ on }(0,T)\times\po\,, \;U=v_0\mbox{ on }\{0\}\times\OO\,.\ea\ee We estimate   $\pi$ by means of \rf{NP-II}. That is, we set  $a:=\mbox{curl} \hskip0.05cmU$, we assume $v_0\in J^2(\OO)$, and,
  via the semigroup properties of $U$ (see, e.g., \cite{MS-III}),  for $q=2$,   for all $T>0$, we get
\be\label{NP-III}\dm \pi(t)\dm_{L^2(\OO\cap B_{R_0})}\leq c(T)\dm v_0\dm_2\Big[t^{-1+\frac\beta2}+t^{-1+\frac\beta4}\Big]\,, \mbox{ for all }t\in (0,T)\,,\ee with $\beta=\frac{1-2\lambda}{3}$. Keeping this in hand, we can also deduce an estimate in the exterior of $B_{R_0}$. Actually, by means of a cut of the equation \rf{NP} in $B_{R_0}$, we get
$$\Delta(\pi h_{R_0})=\pi\Delta h_{R_0}+2\nabla\pi\cdot\nabla h_{R_0},$$
with $h_{R_0}$ smooth function such that $h_{R_0}(x)=1$ for $|x|>R_0$, $h_{R_0}(x)=0$ for $|x|<\frac {R_0}{2}.$ 
Then, by the representation formula of the solution, we obtain  
$$\pi(t,x)= -\intl{\R^3}\mathcal E(x-y)\pi\Delta h_{R_0} dy-2\intl{\R^3}\nabla\mathcal E(x-y)\nabla h_{R_0}\pi dy,$$
with $\mathcal E$ fundamental solution. So that, for $\ov r>3$ and for $|x|>2R_0$, we  easily get \be\label{NPIV}\ba{ll} \dm \pi(t)\dm_{L^{\ov r}(|x|>R_0)}\hskip-0.2cm&\leq c\dm \pi(t)\dm_{L^2(\OO\cap B_{R_0})}\VS&\leq c(T)\dm v_0\dm_2\Big[t^{-1+\frac\beta2}+t^{-1+\frac\beta4}\Big],\mbox{ for all }t\in(0,T)\,.\ea\ee
\par Consider the following initial boundary value problem for the Stokes system:
\be\label{STF}\ba{l}W_t-\Delta W+\n\pi_W=F\,,\;\n\cdot W=0\,,\;\mbox{ on }(0,T)\times\OO\,,\VS W=0\mbox{ on  }(0,T)\times\OO\,,\;W=0\mbox{ on }\{0\}\times\OO\,.\ea\ee \begin{lemma}\label{EXF}{\sl In problem \rf{STF} assume $F\in L^r(0,T;L^s(\OO)),\, \frac3s+\frac2r=4,\,s\in(1,\frac32)$. Then there exists a unique solution to problem \rf{STF} such that \be\label{EXF}\intll0T\Big[\dm D^2W(\tau)\dm_s^r+\dm \n \pi_W(\tau)\dm_s^r+\dm W_\tau(\tau)\dm_s^r\Big]d\tau\leq c\intll0T\dm F(\tau)\dm_s^rd\tau\,,\ee
with $c$ independent of $F$ and $T$.}\end{lemma}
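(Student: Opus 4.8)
The plan is to recast \rf{STF} as an abstract parabolic Cauchy problem governed by the Stokes operator, and then to combine the maximal $L^r$-regularity of that operator with the stationary exterior Stokes estimate. Let $A:=-P_s\Delta$ be the Stokes operator on $J^s(\OO)$, with domain $W^{2,s}(\OO)\cap W^{1,s}_0(\OO)\cap J^s(\OO)$. Applying the Helmholtz projector $P_s$ to \rf{STF}$_1$ and using $\nabla\cdot W=0$ (so that $P_sW_t=W_t$ and $P_s\nabla\pi_W=0$), problem \rf{STF} becomes equivalent to the evolution equation
$$W_t+AW=P_sF,\qquad W(0)=0,$$
the pressure being afterwards recovered from $\nabla\pi_W=(I-P_s)(F+\Delta W)$. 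Since $P_s$ is bounded on $L^s(\OO)$ for every $s\in(1,\infty)$, both for bounded and for exterior domains, we have $\|P_sF(\tau)\|_s\le c\,\|F(\tau)\|_s$, so the forcing stays in $L^r(0,T;J^s(\OO))$.

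First I would invoke the maximal $L^r$-regularity of $A$. The Stokes semigroup $e^{-tA}$ is bounded analytic on $J^s(\OO)$, and $A$ admits bounded imaginary powers (equivalently a bounded $H^\infty$-calculus) on $J^s(\OO)$ for bounded and exterior domains; hence, by the Dore--Venni theorem on the UMD space $L^s(\OO)$, the mild solution $W(t)=\int_0^t e^{-(t-\tau)A}P_sF(\tau)\,d\tau$ is the unique strong solution and satisfies
$$\int_0^T\!\!\Big(\|W_t(\tau)\|_s^r+\|AW(\tau)\|_s^r\Big)\,d\tau\le c\int_0^T\!\!\|P_sF(\tau)\|_s^r\,d\tau\le c\int_0^T\!\!\|F(\tau)\|_s^r\,d\tau,$$
with $c=c(r,s,\OO)$ independent of $T$, since the estimate holds on the whole half-line $(0,\infty)$ and is insensitive to the length of the time interval. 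This is precisely the abstract $L^r$--$L^s$ estimate of Giga--Sohr. Uniqueness follows by applying the same bound to the difference of two solutions with $F=0$.

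The remaining step is to upgrade $\|AW\|_s=\|P_s\Delta W\|_s$ to the full second-order norm and to control the pressure. For a.e.\ $\tau$ the pair $(W(\tau),\pi_W(\tau))$ solves the stationary Stokes system $-\Delta W+\nabla\pi_W=F-W_t=:G$, $\ \nabla\cdot W=0$ in $\OO$, $\ W=0$ on $\po$. Here the hypothesis $s\in(1,\frac32)$ is essential: for $s<\frac n2=\frac32$ the exterior stationary Stokes problem enjoys the \emph{homogeneous} a priori estimate $\|D^2W\|_s+\|\nabla\pi_W\|_s\le c\,\|G\|_s$, with no lower-order terms on the right and with $c$ independent of the solution (Cattabriga for bounded domains, Galdi / Kozono--Sohr for exterior domains). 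Consequently, for a.e.\ $\tau$,
$$\|D^2W(\tau)\|_s+\|\nabla\pi_W(\tau)\|_s\le c\big(\|F(\tau)\|_s+\|W_t(\tau)\|_s\big).$$
Raising to the power $r$, integrating over $(0,T)$ and inserting the maximal-regularity bound of the previous step yields \eqref{EXF} with $c$ independent of $F$ and $T$. The scaling relation $\frac3s+\frac2r=4$ confines $r$ to $(1,2)$, comfortably inside the admissible range $(1,\infty)$ of the maximal-regularity theorem, so it imposes no further restriction.

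The two delicate points I would flag are the following. The $T$-independence of the constant is the crux: it rests on maximal regularity holding on the entire half-line, which for exterior domains is nontrivial because $0$ belongs to the spectrum of $A$, and is exactly the content of the Giga--Sohr bounded-imaginary-powers result. The second subtle point is the homogeneity of the stationary estimate, which genuinely requires $s<\frac32$ in the exterior case; for $s\ge\frac32$ one would be forced to add norms of $W$ or $\nabla W$ on the right-hand side, destroying both the $F$-homogeneity and the $T$-uniformity of the final bound. For bounded domains the stationary estimate is classical and $0\in\rho(A)$, so both difficulties evaporate and the argument simplifies.
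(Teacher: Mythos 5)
Your proposal is correct, but it follows a genuinely different route from the paper, which disposes of this lemma by pure citation: the estimate is taken from Maremonti--Solonnikov \cite{MS-III,MS-IV}, where the $T$-uniform coercive estimate for the nonstationary Stokes system in exterior domains is proved directly, pressure included, by potential-theoretic and duality methods, and where the restriction on $s$ is shown to be not merely technical (in exterior domains the $T$-independent bound on $\dm D^2W\dm_s$ and $\dm\nabla\pi_W\dm_s$ fails outside the low range of $s$, so the hypothesis $s\in(1,\frac32)$ in the lemma reflects a sharp phenomenon). You instead factor the result into two independent ingredients: (i) abstract maximal $L^r$-regularity on the half-line for the Stokes operator $A$ on $J^s(\OO)$, via bounded imaginary powers and Dore--Venni (Giga for bounded domains, Giga--Sohr for exterior ones), valid for all $1<r,s<\infty$ and giving the $T$-independent control of $\|W_t\|$ and $\|AW\|$ even though $0\in\sigma(A)$; and (ii) the homogeneous stationary estimate of Cattabriga/Galdi/Kozono--Sohr, valid in exterior domains precisely for $s<\frac n2=\frac32$, applied at a.e.\ time to $-\Delta W+\nabla\pi_W=F-W_t$ to recover $\dm D^2W\dm_s+\dm\nabla\pi_W\dm_s$. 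This modular argument buys transparency: it isolates exactly where $s<\frac32$ enters (only in upgrading $\|AW\|_s$ to the full second-order norm plus pressure gradient) and where the $T$-uniformity comes from (half-line maximal regularity), and you correctly flag both as the delicate points. The cited route buys the coupled estimate in one stroke together with the sharpness of the admissible $(r,s)$ range. Your argument is complete modulo one standard fact worth stating explicitly: the domain characterization $D(A_s)=W^{2,s}(\OO)\cap W^{1,s}_0(\OO)\cap J^s(\OO)$ for exterior domains, which guarantees that $W(\tau)$ lies in the uniqueness class to which the stationary a priori estimate applies.
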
 \bp This result is well known, a proof can be found in \cite{MS-III,MS-IV}.\ep\begin{lemma}\label{APWS}{\sl Let $\{(v^m,\pi_{v^m})\}$ be the   sequence of solutions   to problem \rf{MNS} furnished by Lemma\,\ref{LJAM}. Then there exist functions $\pi^1_{v^m},\,\pi^2_{v^m}$ such that   $\pi_{v^m}=\pi^1_{v^m}+\pi^2_{v^m}$, and, for all $\ov r>3$, $R_0>0$ and $\lambda\in (0,\frac12)$, we also obatin \be\label{APWS-I}\ba{l} \dm \pi^1_{v^m}(t)\dm_{L^2(\OO\cap B_{R_0})}+\dm \pi_{v^m}^1(t)\dm_{L^{\ov r}(|x|>R_0)}\leq c(T)\dm v_0\dm_2t^{-1+\frac \beta4}\,,\mbox{ with $\beta:=\frac{1-2\lambda}3$}\,,\\ \hskip-0.8cm\mbox{and}\\{\displ\intll0T\dm \n\pi_{v^m}^2(\tau)\dm_s^rd\tau\leq c\dm v_0\dm_2^{2r}}\,,\ \frac 3s+\frac2r=4\,.\ea\ee}\end{lemma}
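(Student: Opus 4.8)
The plan is to split the solution of the mollified system \rf{MNS} into a linear Stokes part carrying the initial datum and a forced Stokes part carrying the mollified convective term, and to read off the two pressures from this decomposition. Concretely, let $U^m$ be the solution of the homogeneous Stokes problem \rf{ST} with datum $v_0^m$ and let $\pi^1_{v^m}$ be its pressure; set $W^m:=v^m-U^m$. Subtracting \rf{ST} from \rf{MNS} and using $\n\cdot J_m[v^m]=0$, the pair $(W^m,\pi^2_{v^m})$, with $\pi^2_{v^m}:=\pi_{v^m}-\pi^1_{v^m}$, solves the forced Stokes problem \rf{STF} with zero initial datum and forcing $F:=-J_m[v^m]\cdot\n v^m$; since $v^m$ and $U^m$ both vanish on $\po$, so does $W^m$, and since $v^m(0)=v_0^m=U^m(0)$ the datum of $W^m$ is indeed zero. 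This produces the claimed splitting $\pi_{v^m}=\pi^1_{v^m}+\pi^2_{v^m}$.

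For the first pressure it suffices to repeat, with $v_0$ replaced by $v_0^m$, the computation leading to \rf{NP-III} and \rf{NPIV}: taking $a:=\mathrm{curl}\,U^m$ in Lemma\,\ref{LA-I}, estimating by \rf{NP-II} and using the smoothing of the Stokes semigroup, one obtains $\dm\pi^1_{v^m}(t)\dm_{L^2(\OO\cap B_{R_0})}+\dm\pi^1_{v^m}(t)\dm_{L^{\ov r}(|x|>R_0)}\le c(T)\dm v_0^m\dm_2\big[t^{-1+\frac\beta2}+t^{-1+\frac\beta4}\big]$ for $\ov r\in(3,6)$. Since $t\in(0,T)$ one may absorb $t^{-1+\frac\beta2}=t^{\frac\beta4}\,t^{-1+\frac\beta4}\le T^{\frac\beta4}t^{-1+\frac\beta4}$ into the second summand, and since $\dm v_0^m\dm_2\le\dm v_0\dm_2$ the bound is uniform in $m$ and is exactly the first line of \rf{APWS-I}.

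For the second pressure I would apply the maximal regularity estimate \rf{EXF} to $(W^m,\pi^2_{v^m})$, which reduces the matter to controlling $\intll0T\dm F(\tau)\dm_s^r\,d\tau$ with $\frac3s+\frac2r=4$, $s\in(1,\frac32)$, uniformly in $m$. Here I would use that the mollifier is an $L^p$-contraction, so $\dm J_m[v^m]\dm_{s_1}\le c\dm v^m\dm_{s_1}$, and estimate, by H\"older in space, $\dm F\dm_s\le c\dm v^m\dm_{s_1}\dm\n v^m\dm_2$ with $\frac1{s_1}=\frac1s-\frac12\in(\frac16,\frac12)$. Interpolating $v^m$ between $L^\infty(0,T;L^2)$ and $L^2(0,T;L^6)$ (the latter via Sobolev and the energy relation for $v^m$) gives $v^m\in L^{p_1}(0,T;L^{s_1})$ with $\frac1{p_1}=\frac\theta2$ and $\theta=3\frac{s-1}{s}$; the crucial point is that $\frac3s+\frac2r=4$ is precisely equivalent to $\frac r{p_1}+\frac r2=1$, so a further H\"older inequality in time closes the estimate. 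Since $\theta p_1=2$, $\intll0T\dm\n v^m\dm_2^2\,d\tau\le\frac12\dm v_0\dm_2^2$ and $\sup_t\dm v^m\dm_2\le\dm v_0\dm_2$ by the energy relation, counting the powers of $\dm v_0\dm_2$ gives $(1-\theta)r+\frac{2r}{p_1}+r=2r$, that is $\intll0T\dm F\dm_s^r\,d\tau\le c\dm v_0\dm_2^{2r}$, which is the second line of \rf{APWS-I}.

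The main obstacle is this last estimate: one must choose the intermediate exponents $s_1,p_1,\theta$ so that the two H\"older inequalities and the interpolation are simultaneously admissible, and verify that their joint admissibility collapses exactly to the hypothesis $\frac3s+\frac2r=4$ of \rf{EXF}; the uniformity in $m$ then rests only on the $L^p$-boundedness of $J_m$ and on $\dm v_0^m\dm_2\le\dm v_0\dm_2$. The remaining points, namely the well-posedness of the two Stokes problems and the identification of the pressures, are routine once the decomposition is in place.
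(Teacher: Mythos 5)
Your proposal is correct and follows essentially the same route as the paper: the same splitting $v^m=U^m+W^m$ with $(U^m,\pi^1_{v^m})$ solving \rf{ST} and $(W^m,\pi^2_{v^m})$ solving \rf{STF} with $F=-J_m[v^m]\cdot\n v^m$, the bounds \rf{NP-III}--\rf{NPIV} for $\pi^1_{v^m}$, and the maximal regularity of Lemma\,\ref{EXF} for $\pi^2_{v^m}$. Your exponent bookkeeping ($\theta=3\frac{s-1}{s}$, $(1+\theta)r=2$, total power $2r$) correctly fills in the step the paper dismisses with ``one easily deduces,'' and your absorption of $t^{-1+\beta/2}$ into $T^{\beta/4}t^{-1+\beta/4}$ matches the stated form of \rf{APWS-I}$_1$.
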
\bp The result of the lemma is an immediate consequence of the following decomposition:$$\mbox{for all }m\in\N, v^m=U^m+W^m\,\;\mbox{ and }\;\pi_{v^m}=\pi_{U^m}+\pi_{W_m}\,,$$ with $(U_m,\pi_{U^m})$ solution to problem \rf{ST} with initial data $U^m=v^m_0$, and $(W^m,\pi_{W^m})$ solution to problem \rf{STF} with $F^m=J_m(v^m)\cdot \n v^m$. Since $v^m\in L^2(0,T;J^{1,2}(\OO))$, one easily deduces that $F^m\in L^r(0,T;L^s(\OO))$ provided that $\frac 3s+\frac2r=4$. \ep From estimate \rf{APWS-I}$_2$, via the Sobolev embedding theorem (cf. Lemma\,5.2 of \cite{GPG}), there exists a function $\pi_{v^m}^0(\tau)$ such that \be\label{APWS-II}\intll0T\dm \pi_{v^m}^2(\tau)-\pi_{v^m}^0(\tau)\dm_{\frac{3s}{3-s}}^rd\tau\leq c
\intll0T\dm \nabla \pi_{v^m}^2(\tau)\dm_{s}^rd\tau
\leq c\dm v_0\dm_2^{2r}\,,\ee for $\frac 3s+\frac2r=4$. In the sequel we assume that $\pi_{v^m}:=\pi_{v^m}^1(t,x)+\pi_{v^m}^2(t,x)-\pi_{v^m}^0(t)$.\par
Now, we are in a position to prove estimate \rf{LDD}.\par We consider $R>0$ such that $  R >2R_0$. We denote by $h_R$ a smooth function such that $h_R(x)=1$ for $|x|>R$, $h_R=0$ for $|x|<\frac R2$ with  $|D^2 h_R|+|\n h_R|\leq \frac cR$. We multiply equation \rf{MNS}$_1$ by $v^m(t,x) h_R(x)$. Integrating by parts on $(0,T)\times\OO$, we get
$$\ba{ll}\dm v^m(t)\dm_{L^2(|x|>R)}^2\hskip-0.2cm&\displ\leq \dm v^m_0\dm_{L^2(|x|>\frac R2)}^2+ cR^{-1}\!\intll0t\!\Big[\dm v^m \dm_2^2+\dm v^m\dm_3^3+\dm \pi_{v^m}|v^m|\dm_{L^1(R<|x|<2R)}\Big]d\tau\VS&\displ=:\dm v^m_0\dm_{L^2(|x|>\frac R2)}^2+ cR^{-1}\!\intll0t\!\big[I_1(\tau)+I_2(\tau)+I_3(\tau)\big]d\tau.\ea$$ Applying the Gagliardo-Nirenberg inequality and the energy relation, we deduce
$$I_1(\tau)+I_2(\tau)\leq \dm v^m_0\dm_2^2+c\dm v^m_0\dm_2^\frac32\dm\n v^m(\tau)\dm_2^\frac32\,.$$ By virtue of Lemma\,\ref{APWS},     assuming $\ov r\in(3,6)$ and $\frac 3s+\frac2r=4$, by applying the H\"older inequality,  for $I_3$ we get $$\ba{ll}\dm \pi_{v^m}|v^m|\dm_{L^1(R<|x|<2R)}\hskip-0.2cm&\leq \Big[cR^{3\frac{\ov r-2}{2\ov r}}\dm \pi_{v^m}^1(\tau)\dm_{L^{\ov r}(|x|>\frac R2)} +cR^\frac {5s-6}{2s}\dm \pi^2_{v^m}-\pi^0_{v^m}\dm_\frac{3s}{3-s}\Big]\dm v^m\dm_2\VS&\leq c\Big[R^{3\frac{\ov r-2}{2\ov r}}\dm \pi_{v^m}^1(\tau)\dm_{L^{\ov r}(|x|>\frac R2)} +R^\frac {5s-6}{2s}\dm \n\pi^2_{v^m} \dm_s\Big]\,,\ea $$ for all $R>2R_0$.
Increasing the terms $I_i$, $i=1,2,3$, by means of the above estimates,  we arrive at 
$$\ba{ll}\dm v^m(t)\dm_{L^2(|x|>R)}^2\hskip-0.2cm&\displ\leq\dm v^m_0\dm_{L^2(|x|>\frac R2)}^2+c R^{-1}\intll0t\Big[\dm v^m_0\dm_2^2+c\dm v^m_0\dm_2^\frac32\dm\n v^m(\tau)\dm_2^\frac32\VS&\hskip 4cm+ R^{3\frac{\ov r-2}{2\ov r}}\dm \pi_{v^m}^1(\tau)\dm_{L^{\ov r}(|x|>\frac R2)}+R^\frac {5s-6}{2s}\dm \n\pi^2_{v^m} \dm_s\Big]d\tau\,.\ea$$  Via estimates \rf{APWS-I}, applying the H\"older inequality and the energy relation we prove that $$ \dm v^m(t)\dm_{L^2(|x|>R)}^2   \leq \dm v^m_0\dm_{L^2(|x|>\frac R2)}^2+  cR^{-1}\big[ t+t^\frac14\dm v_0^m\dm_2+t^{\frac\beta4}R^{3\frac{\ov r-2}{2\ov r}}+t^{1-\frac1r}\dm v^m_0\dm_2R^{\frac{5s-6}{2s}}\big]\dm v^m_0\dm_2^2\,, $$
which furnishes \rf{LDD}.\subsection{\label{AX-II}Uniqueness backward in time for the sequence $\{v^m\}$} For the sake of the completeness we prove a  result concerning the uniqueness backward in time for solutions $(v^m,\pi^m)$ to the IBVP \rf{MNS}, whose existence is furnished by Lemma\,\ref{LJAM}.  A wide literature on the topic can be found in \cite{Py} and \cite{AS}. Here we employ   the logarithmic convexity method developed in \cite{KP,GS}. In order to prove the result we premise a result\begin{lemma}\label{LI}{Let $v^m_0\in \mathscr C_0(\OO)$ and $(v^m,\pi_{v^m})$ the solution to problem \rf{MNS}. Then, for all $T>0$, there exists a constant $A_m$ such that  \be\label{LI-I}\dm v^m(t)\dm_\infty\leq A_m\,\mbox{ for all }t\in[0,T]\,.\ee}\end{lemma} \bp The result of the lemma is classical in the case of a solution to problem \rf{NS}, provided that one considers $[0,T]$ as a subset of the local interval of  existence of the solution. In the case of a solution to  problem \rf{MNS}, by employing the properties of the mollifier, one can prove property \rf{LI-I} on $[0,T]$ for all $T>0$ with a bound depending on $m$. Actually, for all $T>0$, one proves Ladyzhenskaya's estimate (see \cite{Ld} or \cite{Hy}), that is $\dm v^m(t)\dm_{2,2}\leq A_m$ for any $t\in [0,T]$. These considerations  allow us to omit further details related to estimate \rf{LI-I}.\ep
We are going to prove\begin{lemma}\label{LA}{\sl If $v_0^m\ne0$, then the solution $(v^m,\pi^m)$ to problem \rf{MNS} enjoys the property    $\dm\n v^m(t)\dm_2>0$ for all $t>0$\,.}\end{lemma}\bp
 We start from \rf{PDelta-vt}, that furnishes:
\be\label{A-I}\mbox{$\frac d{dt}$}\dm \n v^m\dm_2^2+ \dm P\Delta v^m\dm_2^2+\dm v_t^m\dm_2^2\leq \dm v\dm_\infty^2\dm \n v^m\dm_2^2\,,\mbox{  a.e. in }t\!>\!0\,.\ee We recall that the following estimates hold:
\be\label{A-II}\dm \n v^m\dm_2^2\leq \dm P\Delta v^m\dm_2\dm v^m\dm_2\mbox{\; and \;}\dm \n v^m\dm_2^2\leq \dm  v^m_t\dm_2\dm v^m\dm_2\,.\ee By virtue of the energy equation  for $(v^m,\pi^m)$, we get
\be\label{A-III}\dot E(t)=-2\dm \n v^m\dm_2^2\,, \; \ddot E(t)=-2\frac d{dt}\dm \n v^m\dm_2^2\,,\ee where we set $E(t):=\dm v^m\dm_2^2$. Therefore, by \rf{LI-I} and \rf{A-I}-\rf{A-III} we arrive at
\be\label{A-IV}- \ddot E+\frac{{\dot E}^2\hskip-0.1cm}{E}\leq 2A_m^2\dm \n v^m\dm_2^2\,.\ee 
We prove the result of the lemma claiming that if  $\ov t>0$ is the first instant  such that $\dm\n v^m(\ov t)\dm_2=0$, then $\dm v^m(t)\dm_2=0$ for all $t\in [0,\ov t]$, that is a contradiction. Since, for all $T>0$, $ v^m\in C([0,T);J^{1,2}(\OO))$, if  there exists $\ov t>0$ such that $\dm  v^m(\ov t)\dm_2=0$, then there exists $\delta>0$  such that   $\dm v^m(t)\dm_2\leq 1$ for all $t\in [\ov t-\delta,\ov t]$.   So that, for a suitable $h>0$, the inequality \rf{A-IV} can be written as $$\ddot E-\frac{{\dot E}^2}E\geq  h {{\dot E}}\;\Rightarrow \;\frac{\ddot EE-{\dot E}^2}{E^2}\geq h \frac{{\dot E}}E\Leftrightarrow \frac d{dt}\Big[e^{-ht}\frac{\dot E}E\Big]\geq0\,,\mbox{ for all }\;t\in[\ov t-\delta,\ov t]\,.$$ Set $\sigma=e^{-ht}$, we deduce
$$\frac {d}{d\sigma}\Big[\frac1 E \frac d{d\sigma}E\Big]\geq0\,.$$
 This last  implies that $\log E$ is a convex function.   That is,   for all $h\in [0,1]$,
\be\label{log}\ba{l} \log{E(ht+(1-h)t_0)}\leq h \log{E(t)}+(1-h) \log{E(\overline t-\delta)} \,.\ea\ee Since in $\ov t>0$ it is $\dm   v^m(\ov t)\dm_2=0$, then we arrive at $\dm   v^m(t)\dm_2=0$ for all  $t<\ov t$, which is a contradiction with the hypothes   $v_0^m\ne0$.  \ep

{\small
}


\begin{thebibliography}{99}
\bibitem{AF}
R.A. Adams and J.J.F. Fournier, \emph{Sobolev spaces}, second ed., Pure and
  Applied Mathematics (Amsterdam), vol. 140, Elsevier/Academic Press,
  Amsterdam, 2003.
\bibitem{AS}K.A. Ames and B. Straughan, {\it Non-Standard and Improperly Posed Problems}, Academic Press, San Diego - Toronto, 1997.
\bibitem{BV}
T. Buckmaster and V. Vicol, \emph{Nonuniqueness of weak solutions to the
  {N}avier-{S}tokes equation}, Ann. of Math. (2) \textbf{189} (2019), no.~1,
  101--144. 

\bibitem{CKN}
L. Caffarelli, R.~Kohn, and L.~Nirenberg, \emph{Partial regularity of suitable
  weak solutions of the {N}avier-{S}tokes equations}, Comm. Pure Appl. Math.
  \textbf{35} (1982), no.~6, 771--831. 
\bibitem{Ch-Vs}
K. Choi and A.F. Vasseur, \emph{Estimates on fractional higher
  derivatives of weak solutions for the {N}avier-{S}tokes equations}, Ann.
  Inst. H. Poincar\'{e} Anal. Non Lin\'{e}aire \textbf{31} (2014), no.~5,
  899--945. 

\bibitem{CF}
P. Constantin and C. Foias, \emph{Navier-{S}tokes equations}, Chicago
  Lectures in Mathematics, University of Chicago Press, Chicago, IL, 1988.
  

\bibitem{CM-II}
F.~Crispo and P.~Maremonti, \emph{On the spatial asymptotic decay of a suitable
  weak solution to the {N}avier-{S}tokes {C}auchy problem}, Nonlinearity
  \textbf{29} (2016), no.~4, 1355--1383. 

\bibitem{CM-I}
F. Crispo and P. Maremonti, \emph{A remark on the partial regularity
  of a suitable weak solution to the {N}avier-{S}tokes {C}auchy problem},
  Discrete Contin. Dyn. Syst. \textbf{37} (2017), no.~3, 1283--1294.

\bibitem{CM-III}
F. Crispo and P. Maremonti, \emph{\it Some remarks on the partial
  regularity of a suitable weak solution to the navier-stokes cauchy problem},
  Zap. Nauchn. Sem. S.-Petersburg. Otdel. Mat. Inst. Steklov. (POMI)
  \textbf{477} (2018).

\bibitem{Du}
G.F.D. Duff, \emph{Derivative estimates for the {N}avier-{S}tokes equations
  in a three-dimensional region}, Acta Math. \textbf{164} (1990), no.~3-4,
  145--210. 

\bibitem{EG} L.C. Evans and R.F. Garlepy, {\it Measure theory and fine properties of functions}, Textbooks in Mathematics, CRC Press, (2015).
\bibitem{T}
C. Foia\c{s}, C. Guillop\'{e}, and R. Temam, \emph{New a priori
  estimates for {N}avier-{S}tokes equations in dimension {$3$}}, Comm. Partial
  Differential Equations \textbf{6} (1981), no.~3, 329--359. 

\bibitem{GPG}
G.P. Galdi, \emph{An Introduction to the Mathematical Theory of the Navier-Stokes Equations},   Springer Monographs in Mathematics, vol. 1  New-York (2011). 
\bibitem{GS} G.P. Galdi, and B. Straughan, \emph{Stability of solutions of the Navier-Stokes equations backward in time}, Arch. Ration. Mech.
Anal. \textbf{101} (1988) 107-114.
\bibitem{GM}
G.P. Galdi and P. Maremonti, \emph{Monotonic decreasing and
  asymptotic behavior of the kinetic energy for weak solutions of the
  {N}avier-{S}tokes equations in exterior domains}, Arch. Rational Mech. Anal.
  \textbf{94} (1986), no.~3, 253--266. 
\bibitem{Hy}
J.G. Heywood, \emph{The {N}avier-{S}tokes equations: on the existence,
  regularity and decay of solutions}, Indiana Univ. Math. J. \textbf{29}
  (1980), no.~5, 639--681. \bibitem{KP} R.J. Knops and L.E.  Payne,{ \it On the stability of solutions of the Navier-Stokes equations backward in time}, Arch. Rational Mech. Anal. {\bf 29} 1968 331-335. 
\bibitem{Ld}
O.A. Ladyzhenskaya, \emph{The mathematical theory of viscous incompressible
  flow}, Revised English edition. Translated from the Russian by Richard A.
  Silverman, Gordon and Breach Science Publishers, New York-London, 1963.

\bibitem{L}
J. Leray, \emph{Sur le mouvement d'un liquide visqueux emplissant l'espace},
  Acta Math. \textbf{63} (1934), no.~1, 193--248. 

\bibitem{MRIII}
P. Maremonti, \emph{Some interpolation inequalities involving {S}tokes
  operator and first order derivatives}, Ann. Mat. Pura Appl. (4) \textbf{175}
  (1998), 59--91. 
\bibitem{Mi}
P. Maremonti, \emph{A note on {P}rodi-{S}errin conditions for the regularity of a
  weak solution to the {N}avier-{S}tokes equations}, J. Math. Fluid Mech.
  \textbf{20} (2018), no.~2, 379--392. 

\bibitem{MRIV}
P. Maremonti, \emph{On an interpolation inequality involving the {S}tokes operator},
  Mathematical analysis in fluid mechanics---selected recent results, Contemp.
  Math., vol. 710, Amer. Math. Soc., Providence, RI, 2018, pp.~203--209.
\bibitem{MR-V}P. Maremonti, \emph{On the $L^p$ Helmholtz decomposition: A review of a result due to Solonnikov},  Lithuanian Mathematical J., {\bf58} (2018) 268-283,
Doi 10.1007/s10986-018-9403-6
\bibitem{MS-III}P. Maremonti and V.A. Solonnikov, \emph{On nonstationary Stokes problem in exterior domains}, Ann.
Scuola Norm. Sup. Pisa Cl. Sci. {\bf 24} (1997)  395-449.
\bibitem{MS-IV}P. Maremonti and V. A. Solonnikov, \emph{An estimate for the solutions of a Stokes system in exterior domains} (Russian, with English summary), Zap. Nauchn. Sem. Leningrad. Otdel. Mat. Inst. Steklov. (LOMI) {\bf 180} (1990), no. Voprosy Kvant. Teor. Polya i Statist. Fiz. 9, 105-120, 181; English transl., J. Math. Sci. {\bf 68} (1994), no. 2, 229-239. \bibitem{JM}J.A. Mauro, {\it Some Analytic Questions in Mathematical Physics Problems}, Ph.D. Thesis, University of Pisa, Italy, 2010. http://etd.adm.unipi.it/t/etd-12232009-161531/
\bibitem{MS}
T. Miyakawa and H. Sohr, \emph{On energy inequality, smoothness and
  large time behavior in {$L^2$} for weak solutions of the {N}avier-{S}tokes
  equations in exterior domains}, Math. Z. \textbf{199} (1988), no.~4,
  455--478. 

\bibitem{Ng}
T. Nagasawa, \emph{A new energy inequality and partial regularity for
  weak solutions of {N}avier-{S}tokes equations}, J. Math. Fluid Mech.
  \textbf{3} (2001), no.~1, 40--56.
\bibitem{Py}L.E. Payne, {\it Improperly Posed Problems in Partial Differential  Equations}, SIAM, 1975. 
\bibitem{scheffer} V. Scheffer, {\it Hausdorff measure and the Navier-Stokes equations}, Comm. Math. Phys., {\bf 55} (1977),  97--112.
\bibitem{Sl-R}V.A.Solonnikov, \emph{Estimates of the solutions of the nonstationary Navier-Stokes system}, in Boundary Value Problems of Mathematical Physics and Related Questions in the Theory of Functions. Part 7, Zap. Nauchn. Semin. Leningr. Otd. Mat. Inst. Steklova, {\bf 38}, Nauka, Leningrad, (1973) pp. 153-231 (in Russian).
\bibitem{Sl-E} V.A. Solonnikov, \emph{Estimates for solutions of nonstationary Navier--Stokes equations}, J. Sov. Math., {\bf8} (1977) 467-529.
\bibitem{Vs}
A. Vasseur, \emph{Higher derivatives estimate for the 3{D}
  {N}avier-{S}tokes equation}, Ann. Inst. H. Poincar\'{e} Anal. Non
  Lin\'{e}aire \textbf{27} (2010), no.~5, 1189--1204. 
\end{thebibliography}
\end{document}